\newtheorem{theorem}{Theorem}
\newtheorem{lemma}[theorem]{Lemma}
\newtheorem*{corollary*}{Corollary}
\newtheorem{proposition}[theorem]{Proposition}
\newtheorem*{proposition*}{Proposition}
\theoremstyle{definition}
\newtheorem{remark}[theorem]{Remark}
\newcommand{\Z}{\mathbb{Z}}
\newcommand{\F}{\mathbb{F}}
\newcommand{\Q}{\mathbb{Q}}
\newcommand{\Gal}{\textrm{Gal}}
\newcommand{\Aut}{\textrm{Aut}}
\newcommand{\ssm}{\smallsetminus}
\newcommand{\norleq}{\unlhd}
\newcommand{\im}{\textrm{im}}
\newcommand{\Cp}{\mathfrak{P}}
\newcommand{\GL}{\textrm{GL}}
\newcommand{\SL}{\textrm{SL}}
\newcommand{\oQ}{\overline{\Q}}
\newcommand{\tors}{\textrm{tors}}
\newcommand{\nr}{\text{nr}}
\newcommand{\subseteqc}{\overset{\sim}{\subseteq}}
\date{}
\title[Uniform Bounds on Cyclotomic Division Fields]{Uniform Bounds on the Level of Cyclotomic Division Fields of Elliptic Curves}
\author{Sam Allen}
\address{The Ohio State University, Columbus, OH 43210, USA}
\email{allen.2767@buckeyemail.osu.edu}
\author{Tyler Genao}
\address{The Ohio State University, Columbus, OH 43210, USA}
\email{genao.5@osu.edu}
\begin{document}
\begin{abstract}
In this paper, we prove that for each number field $F$ there exists a uniform bound on the prime levels $p$ of elliptic curves $E/F$ for which $F(E[p])=F(\zeta_p)$. Under the Generalized Riemann Hypothesis, we also give uniform bounds on $p$ for which $F(E[p])/F$ is abelian, provided that $F$ has no rational complex multiplication. These are generalizations of results of Gonz\'{a}lez-Jim\'{e}nez and Lozano-Robledo \cite{GJLR16} to general number fields.
\end{abstract}
\maketitle

\section{Introduction}
Given a characteristic zero field $F$ and an elliptic curve $E/F$, for each integer $n>0$ the $n$-division field of $E$ over $F$, denoted $F(E[n])$, is the field obtained by adjoining to $F$ the $x$ and $y$-coordinates of all points $P\in E$ whose order divides $n$. The $n$-division field also appears as the fixed field under the action of the absolute Galois group $G_F:=\Gal(\overline{F}/F)$ on the $n$-torsion subgroup $E[n]$ of $E$. The $n$-division field is a finite Galois extension of $F$, with Galois group isomorphic to a subgroup of $\GL_2(\Z/n\Z)$, the general linear group of $2\times 2$ matrices over $\Z/n\Z$.
In analogy to the Kronecker-Weber theorem and roots of unity over $\Q$, the coordinates of $n$-torsion points of elliptic curves can lend an explicit description of class field theory for imaginary quadratic fields. 

For an elliptic curve $E/F$, the existence of the perfect, Galois-invariant $n$-Weil pairing on $E$ guarantees that $F(E[n])$ contains a primitive $n$'th root of unity $\zeta_n$ \cite[Corollary III.8.1.1]{Sil09}. A natural question is as follows: \textit{how much larger is the $n$-division field $F(E[n])$ compared to the $n$-cyclotomic field $F(\zeta_n)$?} Intuitively, the $n$-division field should be strictly larger ``most of the time" due to characteristics of the mod-$n$ Galois representation $\rho_{E,n}\colon G_F\rightarrow \GL_2(\Z/n\Z)$, which describes the action of $G_F$ on $E[n]$ and whose kernel has fixed field $F(E[n])$. For example, for any elliptic curve $E$ defined over its field of definition $F=\Q(j(E))$, if $E$ has CM (complex multiplication) then Hamakiotes and Lozano-Robledo showed that $F(E[n])=F(\zeta_n)$ implies $n\leq 3$ \cite{HLR24}. On the other hand, for any number field $F$ and any non-CM elliptic curve $E/F$, by Serre's Open Image Theorem there exists a constant $c:=c(E,F)>0$ such that for any prime $p>c$ one has $\rho_{E,p}(G_F)=\GL_2(\Z/p\Z)$ \cite{Ser72}, which forces $F(E[p])$ to be nonabelian.

In light of the above, one can ask if for each number field $F$ there exists a uniform bound on integers $n>0$ such that there exists an elliptic curve $E/F$ where $F(E[n])=F(\zeta_n)$. For example, as a consequence of progress towards Serre's uniformity question over $\Q$ (whether the constant $c:=c(E,\Q)$ in the preceding paragraph can be chosen independently of $E$) one has for all primes $p>37$ that $\Q(E[p])\neq \Q(\zeta_p)$.
In fact, 
Gonz\'{a}lez-Jim\'{e}nez and Lozano-Robledo have proven a \textit{sharp} upper bound over $\Q$ \cite{GJLR16}.
\begin{theorem}\cite[Theorem 1.1]{GJLR16}\label{Thm_SmallDivisionFieldsOverQ}
Let $E/\Q$ be an elliptic curve. If for $n\in\Z^+$ one has $\Q(E[n])=\Q(\zeta_n)$, then $n\leq 5$. More generally, if $\Q(E[n])/\Q$ is abelian then $n\in \lbrace 2,3,4,5,6,8\rbrace$, and $\emph{Gal}(\Q(E[n])/\Q)$ is isomorphic to 1 of 17 finite abelian groups.
\end{theorem}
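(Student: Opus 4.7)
The plan is to first reduce to the case where $n = p^k$ is a prime power. By the Chinese Remainder Theorem, if $n = \prod p_i^{a_i}$ then $\Gal(\Q(E[n])/\Q)$ embeds into $\prod_i \Gal(\Q(E[p_i^{a_i}])/\Q)$, so $\Q(E[n])/\Q$ is abelian exactly when each prime-power division field $\Q(E[p_i^{a_i}])$ is abelian over $\Q$. The Weil pairing already forces $\Q(\zeta_n) \subseteq \Q(E[n])$, and the sharper hypothesis $\Q(E[n]) = \Q(\zeta_n)$ is equivalent to demanding that the image $H := \rho_{E,n}(G_\Q)$ have order exactly $\phi(n)$ and be mapped isomorphically onto $(\Z/n\Z)^\times$ by the determinant character.

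For each prime $p$, I would classify the possible abelian images $H \subseteq \GL_2(\F_p)$: up to conjugation these lie in either a split Cartan, a nonsplit Cartan, or a subgroup of the form ``center times unipotent radical'' inside a Borel. Each case has arithmetic meaning attached to $E$. A Borel image corresponds to a $\Q$-rational cyclic $p$-isogeny on $E$, and Mazur's isogeny theorem restricts this to $p \in \{2,3,5,7,11,13,17,19,37,43,67,163\}$. A split Cartan image yields two independent $\Q$-rational subgroups of order $p$, so $E$ defines a non-cuspidal rational point on $X_{\mathrm{sp}}^{+}(p)$; for $p$ beyond a small explicit threshold this forces CM, reducing to the well-known $13$ rational CM $j$-invariants. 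A nonsplit Cartan image corresponds to a non-cuspidal rational point on $X_{\mathrm{ns}}(p)$, handled by the results of Bilu--Parent and Bilu--Parent--Rebolledo. The remaining case ``center times unipotent'' is ruled out for $p>2$ because the determinant on such a subgroup only hits squares, contradicting the surjectivity of the cyclotomic character.

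Having bounded $p$ to a short list, I would lift to prime powers $n = p^k$. The reduction map $\GL_2(\Z/p^k\Z) \to \GL_2(\F_p)$ has pro-$p$ kernel, and requiring $\rho_{E,p^k}(G_\Q)$ to be abelian forces strong congruence conditions on the lift; combined with Mazur's torsion theorem for $E(\Q)_{\tors}$ and its refinements, this forces $k$ to be small for each admissible $p$. The remaining work is a finite case-by-case enumeration: for each admissible pair $(p,k)$, translate each allowed abelian image into a moduli problem on a modular curve of small level over $\Q$, and write down all elliptic curves realizing that image along with the resulting Galois group.

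The main obstacle will be this final modular-curves analysis, particularly the need to combine information across several simultaneous prime-power levels to cover composite $n$ such as $n=6$ and $n=8$ and thereby compile the exact list of $17$ abelian Galois groups. The sharper statement $\Q(E[n])=\Q(\zeta_n) \implies n\leq 5$ then drops out by isolating, within this classification, those abelian images whose determinant map is an isomorphism onto $(\Z/n\Z)^\times$, which rules out $n \in \{6,8\}$ and the abelian groups of order strictly greater than $\phi(n)$.
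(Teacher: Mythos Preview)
This theorem is not proved in the present paper: it is quoted as \cite[Theorem~1.1]{GJLR16}, and the paper only summarizes the original method in one sentence (``a close analysis of Galois representations of elliptic curves over $\Q$, in large part via group-theoretic considerations and explicit calculations with modular curves over $\Q$''). So there is no in-paper proof to compare your proposal against in detail; your outline is broadly in the same spirit as that summary.

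That said, your sketch has a concrete misattribution that would derail the argument as written. Bilu--Parent and Bilu--Parent--Rebolledo concern the \emph{split} Cartan modular curves $X_{\mathrm{sp}}^{+}(p)$, not the nonsplit ones; invoking them for $X_{\mathrm{ns}}(p)$ is incorrect, and indeed the nonsplit Cartan normalizer case for general $p$ is precisely the open part of Serre's uniformity question. Over $\Q$ the nonsplit Cartan case is instead disposed of elementarily: complex conjugation maps to an element of $\GL_2(\F_p)$ of order~$2$, trace~$0$, determinant~$-1$, but the only involution in the cyclic group $C_{ns}(p)$ is $-I$, which has determinant~$1$. Hence an abelian image cannot sit inside $C_{ns}(p)$ for $p\geq 3$. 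The present paper explicitly flags this mechanism when it remarks that the proofs of \cite[Propositions~3.6,~3.7]{GJLR16} ``crucially use the existence of a nontrivial complex conjugation in $G_F$ whose image in $\rho_{E,p}(G_F)$ has trace~$0$ and determinant~$-1$.'' If you replace your BPR appeal with this complex-conjugation argument, your prime-level reduction becomes sound; the lift to prime powers and the final enumeration of the $17$ groups then really is the modular-curve casework you anticipate, and is carried out in full in \cite{GJLR16} rather than here.
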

Gonz\'{a}lez-Jim\'{e}nez and Lozano-Robledo proved their result through a close analysis of Galois representations of elliptic curves over $\Q$, in large part via group-theoretic considerations and explicit calculations with modular curves over $\Q$. However, given that considerably less is known about Galois representations over nontrivial number fields, many of their results do not immediately generalize beyond $\Q$.

In this paper, we initiate a study of cyclotomic division fields over general number fields $F$, where \textit{cyclotomic} means $F(E[n])=F(\zeta_n)$. Our first result shows that uniform bounds on prime levels of cyclotomic division fields exist over any number field $F$. We will use $\Delta_{F}$ to denote the discriminant of $F/\Q$.
\begin{theorem}\label{Thm_UniformBds_OnPrimeLevel_OfSmallDivisionFields}
Let $F$ be a number field. Then for all elliptic curves $E/F$ and for all primes $p\in\Z^+$ with $F(E[p])=F(\zeta_p)$, if $p\nmid \Delta_F$ and $p>13$ (resp. $p\mid \Delta_F$ and $p> 12[F:\Q]+1$),
then $E$ has an order $p$ point over an extension $K/F$ where $[K:F]\leq 6$ (resp. $[K:F]\leq 6[F:\Q]$). In particular, $p$ is uniformly bounded. 
\end{theorem}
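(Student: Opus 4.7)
The plan is to exploit the hypothesis $F(E[p])=F(\zeta_p)$ to force $\rho_{E,p}$ into a very constrained shape, extract from it a $p$-torsion point of $E$ over a small extension $K/F$, and then invoke a uniform Merel/Parent-type torsion bound to conclude.

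First I would observe that $H:=\rho_{E,p}(G_F)$ is isomorphic to $\Gal(F(\zeta_p)/F)$, hence cyclic of order $d:=[F(\zeta_p):F]$, with $\det|_H=\chi_p$ injective on $H$. The numerical hypotheses $p>13$ (resp.\ $p>12[F:\Q]+1$) are calibrated precisely so that $d>12$ in each case; a classification of cyclic subgroups of $\GL_2(\F_p)$ of order exceeding $12$ then forces $H$ (up to conjugation) into a split or non-split Cartan subgroup. In the unramified case $d=p-1$, and the only cyclic subgroup of the non-split Cartan of order $p-1$ is the scalar subgroup $\F_p^\times$, which is incompatible with the injectivity of the determinant for $p$ odd; so $H$ sits in a split Cartan. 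The non-split Cartan case, which arises only when $p\mid\Delta_F$, is handled by first passing to the unique quadratic extension $F'/F$ which splits the Cartan, at the cost of a factor of $2$ absorbed into the final constant.

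In the split Cartan case, $\rho_{E,p}=\chi_1\oplus\chi_2$ with $\chi_1\chi_2=\chi_p$; each character factors through the cyclic group $\Gal(F(\zeta_p)/F)$, so $\chi_i=\chi_p^{e_i}$ with $e_1+e_2\equiv 1\pmod d$. A generator $P_i$ of the $\chi_i$-eigenspace of $E[p]$ is rational over $K_i:=F(P_i)$ of degree $n_i:=|\chi_i(G_F)|$ over $F$, so the task reduces to bounding $\min_i n_i$. Because $\rho_{E,p}$ is globally reducible, $E$ has no supersingular reduction at any prime $\mathfrak{p}\mid p$; and for $p\geq 5$, the Kodaira classification tells us that $E/F_\mathfrak{p}$ acquires semistable reduction over a tame extension of degree $m\in\{1,2,3,4,6\}$. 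Serre's analysis of tame inertial characters, applied to each $\chi_i$ at each such $\mathfrak{p}$, forces each $e_i$ to be congruent to $0$ or $1$ modulo $d/\gcd(d,m)$. A finite case analysis over $m\leq 6$, combined with the global constraint $e_1+e_2\equiv 1\pmod d$, then yields $\min_i n_i\leq m\leq 6$ in the unramified setting. In the ramified setting the inertia at $\mathfrak{p}$ no longer surjects onto $\Gal(F(\zeta_p)/F)$ but has image of index at most $[F:\Q]$; repeating the case analysis with this caveat produces $\min_i n_i\leq 6[F:\Q]$.

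Finally, with a point of order $p$ on $E$ defined over $K/F$ of degree $\leq 6$ (resp.\ $\leq 6[F:\Q]$), Parent's effective uniform bound on prime torsion of elliptic curves over number fields of degree $[K:\Q]\leq 6[F:\Q]^2$ yields an absolute upper bound on $p$ in terms of $[F:\Q]$. The main obstacle is the case analysis of the previous paragraph: reconciling the local inertial constraint $e_i\equiv 0,1\pmod{d/\gcd(d,m)}$ across the five possible values of $m$ with the global relation $e_1+e_2\equiv 1\pmod d$, and tracking the loss in the image of inertia when $p$ ramifies in $F$, requires careful bookkeeping but is tractable because $m$ is limited to $\{1,2,3,4,6\}$ for $p\geq 5$.
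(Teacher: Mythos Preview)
Your overall architecture---force $G:=\rho_{E,p}(G_F)$ into a Cartan using cyclicity and the injective determinant, use Serre's description of inertia at $p$ to locate a $p$-torsion point over a small extension, then invoke Parent---matches the paper's. In the split Cartan case the paper argues more directly than you do: from Theorem~\ref{Thm_ImageOfInertia} one obtains $D^{ee_0}\subseteq G$ (after Lemma~\ref{Lemma_SemiCartanAndNonsplitCartan_inAbelianSubgps}.b), and since $G$ is cyclic with $|G|\mid p-1$ this forces $G^{ee_0}\subseteq D^{ee_0}\subseteq D$, so the field $K$ corresponding to $G^{ee_0}$ has $[K:F]\mid ee_0\le 6e_0$ and the basis vector $Q\in E(K)$. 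Your character-exponent formulation can be made to work, but the specific congruence ``$e_i\equiv 0$ or $1\pmod{d/\gcd(d,m)}$'' is not what inertia yields (the correct relation on $I_L$ is $e_0 e_i\equiv 0$ or $me_0\pmod{p-1}$), and your assertion that global reducibility forbids supersingular reduction is false as stated: it only forces the supersingular inertia image to land in scalars, which is then ruled out by the numerical hypotheses.

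The genuine gap is your treatment of the nonsplit Cartan case when $p\mid\Delta_F$. The proposal to ``pass to the unique quadratic extension $F'/F$ which splits the Cartan'' cannot work: $C_{ns}(p)\subset\GL_2(\F_p)$ is split by extending the \emph{coefficient} field to $\F_{p^2}$, not by any extension of the base number field. For every finite $F'/F$ the image $\rho_{E,p}(G_{F'})$ is a subgroup of $G\subseteq C_{ns}(p)$ and hence acts on $E[p]$ without nonzero fixed vectors unless it is trivial; so no $p$-torsion point appears until $F'\supseteq F(E[p])$, which has degree $|G|$ over $F$, not bounded by $6[F:\Q]$. The paper instead \emph{excludes} $G\subseteq N_{ns}(p)$ entirely under the hypothesis $p>12[F:\Q]+1$: first a size comparison using $|G|\mid p-1$ eliminates $C_{ns}(p)^{ee_0}\subseteqc G$, forcing $D^{ee_0}\subseteqc G$; then Lemma~\ref{Lemma_SemiCartanAndNonsplitCartan_inAbelianSubgps}.c shows that $D^{ee_0}\subseteqc N_{ns}(p)$ implies $p-1\mid 2ee_0\le 12[F:\Q]$, a contradiction. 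You need this inertial argument, not a base change, to dispose of the nonsplit case.
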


We additionally prove more general results for when the mod-$p$ image is abelian. In the following, by GRH we mean the Generalized Riemann Hypothesis. Also recall that a number field is said to have rational CM if it contains an imaginary quadratic field.
\begin{theorem}\label{Thm_UniformBds_OnPrimeLevel_OfAbelianDivisionFields}
Let $F$ be a number field. Then for all elliptic curves $E/F$ and for all primes $p\geq 17$ where $F(E[p])/F$ is abelian, if $p\nmid \Delta_F$ or $p>(6 [F:\Q])!+1$, then $\rho_{E,p}(G_F)$ is contained in either the split or nonsplit Cartan subgroup mod-$p$. In particular, if GRH is true and $F$ has no rational CM, then $p$ is contained in a finite set $S_F$ from \cite[Theorem 1]{LV14} which depends only on $F$, and thus $p$ is uniformly bounded.
\end{theorem}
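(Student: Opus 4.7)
The plan is to use the classification of abelian subgroups of $\GL_2(\F_p)$ to narrow down $H:=\rho_{E,p}(G_F)$, rule out the unique non-Cartan possibility using the Weil-pairing determinant constraint (supplemented by a Galois-theoretic argument in the ramified case), and then apply \cite[Theorem 1]{LV14} for the uniformity assertion.

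Since $H$ is abelian, $H$ lies inside the centralizer of each of its elements. A direct computation shows that the centralizer in $\GL_2(\F_p)$ of any non-scalar matrix is, up to conjugation, either a split Cartan $C_s$, a nonsplit Cartan $C_{ns}$, or the ``Jordan'' abelian subgroup
$Z_u := \{aI + bN : a\in\F_p^\times,\ b\in\F_p\}$
of order $p(p-1)$, where $N$ is a fixed nonzero nilpotent matrix. Thus it suffices to rule out the case $H\subseteq Z_u$ with $H$ containing a non-trivial unipotent element, i.e., an element of order $p$.

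Suppose $H\subseteq Z_u$. Every element of $Z_u$ has determinant $a^2\in(\F_p^\times)^2$, so the mod-$p$ cyclotomic character factors as $\chi_p=\det\rho_{E,p}=\phi^2$ for some character $\phi:G_F\to\F_p^\times$. Hence $\im\chi_p\subseteq(\F_p^\times)^2$, which has index $2$ in $\F_p^\times$ for odd $p$, and so
\[
[F(\zeta_p):F]\;=\;|\im\chi_p|\;\leq\;\tfrac{p-1}{2}.
\]
If $p\nmid\Delta_F$, then $p$ is unramified in $F/\Q$, while $\Q(\zeta_p)/\Q$ is totally (tamely) ramified at $p$; therefore $F\cap\Q(\zeta_p)=\Q$ and $[F(\zeta_p):F]=p-1$. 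This contradicts the displayed bound once $p\geq 3$, and so $H$ must be contained in a (split or nonsplit) Cartan.

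When $p\mid\Delta_F$, the contradiction above no longer comes from $F(\zeta_p)$ alone, and one must leverage the hypothesis $p>(6[F:\Q])!+1$. The plan is to extract from the order-$p$ element of $H$ a cyclic degree-$p$ subextension $L/F$ inside $F(E[p])/F$, and then (mirroring the appearance of the factor $6[F:\Q]$ in Theorem \ref{Thm_UniformBds_OnPrimeLevel_OfSmallDivisionFields}, e.g.\ by producing a $p$-torsion point on $E$ defined over an extension of $\Q$ of degree at most $6[F:\Q]$) to embed an associated Galois group into the symmetric group $S_{6[F:\Q]}$; an element of order $p$ in such a group forces $p\mid(6[F:\Q])!$, contradicting the hypothesis. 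Once we know $H\subseteq C_s$ or $H\subseteq C_{ns}$, \cite[Theorem 1]{LV14}—under GRH and the absence of an imaginary quadratic subfield in $F$—places $p$ in an explicit finite set $S_F$, completing the argument. The main obstacle is the ramified case: constructing the right Galois-theoretic input from which the factorial bound $(6[F:\Q])!$ emerges.
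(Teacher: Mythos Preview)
Your centralizer argument for the classification step is cleaner than the paper's. The paper first invokes Dickson's classification to place $G:=\rho_{E,p}(G_F)$ inside $B_0(p)$, $N_s(p)$ or $N_{ns}(p)$ (Proposition~\ref{Prop_AbelianSubgroupOf_GL2p_Restraints}), and then in three separate cases rules out the non-Cartan part of each, invoking the image of inertia even in the normalizer-of-Cartan cases. Your observation that any abelian subgroup of $\GL_2(\F_p)$ already sits inside a conjugate of $C_s(p)$, $C_{ns}(p)$ or $Z_u$ collapses all of this to the single task of excluding $Z_u$; the normalizer cases become vacuous. Your treatment of the unramified case --- $\det(Z_u)\subseteq(\F_p^\times)^2$ while $\chi_p$ surjects when $p\nmid\Delta_F$ --- is exactly the paper's Case~1(1).

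The ramified case, however, is a genuine gap, and the mechanism you sketch does not work. From $H\subseteq Z_u$ you obtain an $F$-rational line $\langle P\rangle\subseteq E[p]$, but the field of definition of $P$ is the fixed field of the diagonal character $\sigma\mapsto a_\sigma$, whose order can be as large as $(p-1)/2$; nothing bounds it by $6[F:\Q]$, so no embedding into $S_{6[F:\Q]}$ materializes. (The analogous step in Theorem~\ref{Thm_UniformBds_OnPrimeLevel_OfSmallDivisionFields} works only because the cyclotomic hypothesis forces $|G|\mid p-1$, which is absent here.) The factorial $(6[F:\Q])!$ does not arise from a symmetric group at all.

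The ingredient the paper uses is Serre's description of the image of inertia $I$ at a prime above $p$ (Theorem~\ref{Thm_ImageOfInertia}). After passing to an extension of ramification degree $e\in\{1,2,3,4,6\}$ over which $E$ is semistable, $\rho_{E,p}(I)$ contains up to conjugacy either the semi-Cartan power $D^{ee_0}$, or $C_{ns}(p)^{ee_0}$, or --- in the supersingular case with nontrivial wild inertia action --- satisfies $D^{(6d)!}\subseteq\rho_{E,p}(I)$ (here $d=[F:\Q]$, $e_0\leq d$). A direct computation (Lemma~\ref{Lemma_SemiCartanAndNonsplitCartan_inAbelianSubgps}) then shows none of these groups can sit inside $Z_u=\langle\gamma,Z\rangle$ unless, respectively, $p-1\mid ee_0$, $p+1\mid\gcd(ee_0,p^2-1)$, or $p-1\mid(6d)!$. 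The last divisibility is the true source of the factorial bound; your plan needs this local input to close.
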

It is worth noting that if $F$ is a real number field, then one can mimic the proofs of \cite[Propositions 3.6, 3.7]{GJLR16} to show that if $p>2$ and $\rho_{E,p}(G_F)$ is an abelian subgroup of the normalizer of a Cartan subgroup, then it is also diagonalizable, which improves the first conclusion in Theorem \ref{Thm_UniformBds_OnPrimeLevel_OfAbelianDivisionFields}. However, their proof crucially uses the existence of a nontrivial complex conjugation in $G_F$ whose image in $\rho_{E,p}(G_F)$ has trace $0$ and determinant $-1$. For general number fields, one must work harder to prove this image is contained in a Cartan subgroup.

In contrast to Theorem \ref{Thm_SmallDivisionFieldsOverQ}, our proofs of Theorems \ref{Thm_UniformBds_OnPrimeLevel_OfSmallDivisionFields} and \ref{Thm_UniformBds_OnPrimeLevel_OfAbelianDivisionFields} utilize the group-theoretic condition that for an elliptic curve $E/F$, an $n$-division field is cyclotomic if and only if $\rho_{E,n}(G_F)\cap \SL_2(\Z/n\Z)=1$, where $\SL_2(\Z/n\Z)$ denotes the special linear group of $2\times 2$ matrices over $\Z/n\Z$. We also crucially apply Serre's results on the image of the inertia subgroup under the mod-$p$ Galois representation of an elliptic curve \cite{Ser72}, which is summarized in Theorem \ref{Thm_ImageOfInertia}. We combine this with a classification of subgroups of $\GL_2(\Z/p\Z)$ (Theorem \ref{Thm_DicksonClassification}) to put constraints ``above and below" the image $\rho_{E,p}(G_F)$, which by our cyclotomic or abelian hypothesis on $F(E[p])/F$ will prove that $p$ is uniformly bounded.

We end this introduction with some examples of elliptic curves with cyclotomic division fields. We link each elliptic curve's LMFDB page \cite{LMFDB}, as well as the page for their mod-$p$ Galois image if it exists. The subgroup numbering follows \cite{Sut16}.
\begin{itemize}[leftmargin=8pt]
    \item $(n=2)$, $F=\Q(\sqrt{2})$: the elliptic curve \href{https://www.lmfdb.org/EllipticCurve/2.2.8.1/98.1/a/5}{98.1-a5} given by
    \[
    E:{y}^2+{x}{y}+{y}={x}^{3}-36{x}-70
    \]
    satisfies $F(E[2])=F(\zeta_2)$. Its mod-$2$ image is  \href{https://beta.lmfdb.org/ModularCurve/Q/2.6.0.a.1/}{2Cs.}
    \item $(n=3)$, $F=\mathbb{Q}(\sqrt{-1}):$ the elliptic curve \href{https://www.lmfdb.org/EllipticCurve/2.0.4.1/722.1/a/3}{722.1-a3} given by 
    \[
    E:{y}^2+i{x}{y}+i{y}={x}^{3}+10{x}-90
    \]  
    satisfies $F(E[3])=F(\zeta_3)$. Its mod-$3$ image is \href{https://beta.lmfdb.org/ModularCurve/Q/3.24.0-3.a.1.1/}{3Cs.1.1.}
    \item $(n=5)$, $F=\mathbb{Q}(\sqrt{2})$: the elliptic curve \href{https://www.lmfdb.org/EllipticCurve/2.2.8.1/121.1/a/2}{121.1-a2} given by 
    \[
    E: {y}^2+{y}={x}^{3}-{x}^{2}-10{x}-20
    \] 
    satisfies $F(E[5])=F(\zeta_5)$. Its mod-$5$ image is \href{https://beta.lmfdb.org/ModularCurve/Q/5.120.0-5.a.1.2/}{5Cs.1.1.}
    \item $(n=5)$, $F=\Q(\zeta_{15})^+$: the elliptic curve \href{https://beta.lmfdb.org/EllipticCurve/4.4.1125.1/25.1/b/5}{25.1-b5} given by \[
    E: {y}^2+\left(a^{2}-1\right){y}={x}^3+\left(a^{3}-a^{2}-3a+2\right){x}^2+\left(-a+2\right){x}+a^{3}-2a
    \] 
    where $a$ is a root of $ x^{4} - x^{3} - 4 x^{2} + 4 x + 1 $, 
    satisfies $F(E[5])=F(\zeta_5)$. Its mod-$5$ image is 5Cs.1.1[2].
    \item $(n=5)$, $F=\mathbb{Q}(\sqrt{10})$: the elliptic curve \href{https://beta.lmfdb.org/EllipticCurve/2.2.40.1/121.1/d/2}{121.1-d2} given by \[
    E: {y}^2={x}^{3}+{x}^{2}-41{x}-199
    \] 
    satisfies $F(E[5])=F(\zeta_5)$.Its mod-$5$ image is \href{https://beta.lmfdb.org/ModularCurve/Q/5.120.0-5.a.1.1/}{5Cs.1.3.}
    \item $(n=7)$, $F= \mathbb{Q}(\sqrt{-3})$: the elliptic curve \href{https://beta.lmfdb.org/EllipticCurve/2.0.3.1/49.1/CMa/1}{49.1-CMa1} given by \[
    E: {y}^2+a{y}={x}^{3}+\left(a+1\right){x}^{2}+a{x}
    \] 
    where $a=\frac{1+\sqrt{-3}}{2}$,
    satisfies $F(E[7])=F(\zeta_7)$. Its mod-$7$ image is \href{https://beta.lmfdb.org/ModularCurve/Q/7.336.3-7.b.1.2/}{7Cs.1.1.}  
    \item $(n=7)$, $F=\mathbb{Q} (\zeta_{21})^+$: the elliptic curve \href{https://beta.lmfdb.org/EllipticCurve/6.6.453789.1/1.1/a/5}{1.1-a5} given by  \[
    E:{y}^2+(a^{5}-4a^{3}+a^{2}+2a-2){y}={x}^{3}+\left(a^{4}-3a^{2}+a+1\right){x}^{2}
    \]
    \[
    +(a^{5}+a^{4}-4a^{3}
    -2a^{2}+3a){x}+a^{5}-6a^{3}+2a^{2}+9a-6
    \]
    where $a$ is a root of $x^{6} - x^{5} - 6 x^{4} + 6 x^{3} + 8 x^{2} - 8 x + 1$, satisfies $F(E[7])=F(\zeta_7)$. Its mod-$7$ image is 7Cs.1.1[3].
\end{itemize}

\section*{Acknowledgments}
This project began through the Cycle undergraduate mathematics research program at The Ohio State University. The authors thank David Kruzel for earlier discussions involving this project. The preliminary calculations for mod-$p$ Galois images were done with \texttt{Magma} \cite{BCP93}.
\section{Preliminary Results}
In this section, we will review some key definitions, notation and results we will need for Galois representations of elliptic curves. 
\subsection{Definitions and notation}
Fix an algebraic closure $\overline{\Q}$ of $\Q$. Throughout this paper, we let $F$ denote an algebraic extension of $\Q$; thus $F\subseteq \oQ$. Let $E/F$ an elliptic curve, i.e., a nonsingular projective curve of genus one with a point over $F$. We can assume that $E$ is given by an equation
\[
E:y^2+a_1xy+a_3y=x^3+a_2x^2+a_4x+a_6
\]
where each $a_i\in F$. The set $E:=E(\oQ)$ of algebraic points on $E$ forms a group under a geometric chord and tangent law. There is a natural action of the absolute Galois group $G_F:=\Gal(\oQ/F)$ on $E$: for $\sigma\in G_F$ and $P:=(x,y)\in E$, we let $\sigma$ act on $P$ by $\sigma(P):=(\sigma(x),\sigma(y))$. Let $E[\tors]$ denote the subgroup of \textit{torsion points} of $E$ (points with finite order).  Then this action restricts to an action on $E[\tors]$; in fact, for each integer $n\in\Z^+$ we have an induced action on $E[n]$, the group of $n$-torsion points (points with order dividing $n$). This latter action is called the \textit{mod-$n$ Galois representation of $E$,} and is written as
\[
\rho_{E,n}\colon G_F\rightarrow \Aut(E[n]).
\]
The kernel of this action is $\Gal(\oQ/F(E[n]))$, where $F(E[n])$ is the \textit{$n$-division field of $E$;} this field is obtained by adjoining all coordinates of $n$-torsion points on $E$ to $F$. Thus $F(E[n])/F$ is a Galois extension, and we have a faithful action
\[
\rho_{E,n}\colon \Gal(F(E[n])/F)\hookrightarrow \Aut(E[n]).
\]

Recall that $E[n]$ is a free rank two $\Z/n\Z$-module \cite[Corollary III.6.4]{Sil09}. In particular, fixing a basis $\lbrace P,Q\rbrace$ of $E[n]$ given an isomorphism $\Aut(E[n])\cong \GL_2(n)$, where $\GL_2(n):=\GL_2(\Z/n\Z)$ is the group of $2\times 2$ invertible matrices over $\Z/n\Z$. Thus, we can write the mod-$n$ Galois representation as
\[
\rho_{E,n,P,Q}\colon G_F\rightarrow \GL_2(n).
\]
Explicitly, if for $\sigma\in G_F$ we have $\sigma(P)=aP+cQ$ and $\sigma(Q)=bP+dQ$, then $\rho_{E,n,P,Q}(\sigma)=\begin{bmatrix}
a&b\\
c&d
\end{bmatrix}$. Changing the basis of $E[n]$ from $\lbrace P,Q\rbrace$ conjugates this image. By abuse of notation, we will often write $\rho_{E,n}$ instead of $\rho_{E,n,P,Q}$, with some implicit basis $\lbrace P,Q\rbrace$ in mind.

In parallel to the above, recall that for an integer $n\in\Z^+$ there is an action of $G_F$ on the subgroup of $n$'th roots of unity $\mu_n\subseteq \oQ$. This action is described by the \textit{mod-$n$ cyclotomic character}
\[
\chi_n\colon G_F\rightarrow \Aut(\mu_n).
\]
Fixing a primitive $n$'th root $\zeta_n\in \mu_n$, this action is explicitly
\begin{align*}
\chi_n(\sigma):=a_\sigma&&\text{where }\sigma(\zeta_n)=\zeta_n^{a_\sigma}.
\end{align*}
The value $\chi_n(\sigma)$ is independent of the choice of $\zeta_n$. 

The Galois action on the $n$-torsion subgroup of an elliptic curve and the subgroup of $n$'th roots of unity are closely connected. For an elliptic curve $E/F$, a consequence of properties of the $n$-Weil pairing (see \cite[Proposition III.8.1]{Sil09}) is that the determinant of $\rho_{E,n}$ is the mod-$n$ cyclotomic character: for any $\sigma\in G_F$, one has
\[
\det\rho_{E,n}(\sigma)=\chi_n(\sigma).
\]
The Weil pairing also implies that $\zeta_n\in F(E[n])$, where $\zeta_n$ is a primitive $n$'th root of unity \cite[Corollary III.8.1.1]{Sil09}. Let us say that the division field $F(E[n])$ is \textit{cyclotomic} if $F(E[n])=F(\zeta_n)$.
\subsection{Subgroups of $\GL_2(p)$}
In his landmark paper, Serre \cite{Ser72} proved that for any number field $F$ and any non-CM elliptic curve $E/F$, one has for sufficiently large primes $p\gg_{E,F}0$ that $\rho_{E,p}(G_F)=\GL_2(p)$. An important step towards proving this was understanding what the action of inertia on $E[p]$ could be over a field of characteristic zero. In this section, we will review these results of Serre, since they play a crucial role in our analysis of cyclotomic division fields -- intuitively, these inertia images will be too large for our mod-$p$ Galois images if $p$ is unbounded.

Let us review a classification result for subgroups of $\GL_2(p)$ first, given by Serre \cite{Ser72} and essentially due to work of Dickson \cite{Dic58}. First, for an integer $n\in\Z^+$ we let $\SL_2(n):=\SL_2(\Z/n\Z)$ denote the subgroup of $\GL_2(n)$ of matrices with determinant $1$. Assume that $p\geq 3$ is prime; we define the \textit{split Cartan group mod-$p$} as the subgroup of diagonal matrices,
\[
C_s(p):=\left\lbrace \begin{bmatrix}
a&0\\
0&d\end{bmatrix}\in \GL_2(p)\right\rbrace.
\]
Its normalizer is denoted $N_s(p)$; explicitly, we have
\[
N_s(p)=C_s(p)\cup \begin{bmatrix}
0&1\\
1&0
\end{bmatrix}C_s(p)=\left\lbrace \begin{bmatrix}
a&0\\
0&d
\end{bmatrix}, \begin{bmatrix}
0&b\\
c&0
\end{bmatrix}\in \GL_2(p)\right\rbrace.
\]
We call elements of $C_s(p)$ \textit{diagonal,} and elements of $N_s(p)\ssm C_s(p)$ \textit{antidiagonal.}
Each diagonal matrix $m:=\begin{bmatrix}
a&0\\
0&d
\end{bmatrix}\in C_s(p)$ has a \textit{flip,} which is $m_f:=\begin{bmatrix}
d&0\\
0&a
\end{bmatrix}$. 
For a subgroup $H\subseteq C_s(p)$, we let $H_f$ denote the subgroup of all flips of elements from $H$.

We will also define the \textit{nonsplit Cartan group mod-$p$,} denoted $C_{ns}(p)$, as follows. We will write $\F_p:=\Z/p\Z$.
Fix the least positive integer $\epsilon$ which generates $\F_p^\times$: then we have 
\[
C_{ns}(p)=\left\lbrace \begin{bmatrix}
a&b\epsilon\\
b&a
\end{bmatrix}\in \GL_2(p)\right\rbrace.
\]
This is the regular representation of $\F_p[\sqrt{\epsilon}]^\times$ acting on itself via multiplication with respect to the basis $\lbrace 1,\sqrt{\epsilon}\rbrace$. Its normalizer is
\[
N_{ns}(p)=C_{ns}(p)\cup \begin{bmatrix}
1&0\\
0&-1
\end{bmatrix}C_{ns}(p)=\left\lbrace \begin{bmatrix}
a&b\epsilon\\
b&a
\end{bmatrix}, \begin{bmatrix}
a&b\epsilon\\
-b&-a
\end{bmatrix}\right\rbrace.
\]

Let us also define the \textit{Borel subgroup mod-$p$} as the subgroup of upper triangular matrices,
\[
B_0(p):=\left\lbrace \begin{bmatrix}
a&b\\
0&d
\end{bmatrix}\in \GL_2(p)\right\rbrace.
\]
We recall that for any subgroup $H\subseteq B_0(p)$, its \textit{semisimplification} is
\[
H^{\text{ss}}:=\left\lbrace \begin{bmatrix}
a&0\\
0&d
\end{bmatrix}\in \GL_2(p): \exists b\in \F_p\text{ such that }\begin{bmatrix}
a&b\\
0&d
\end{bmatrix}\in \GL_2(p)\right\rbrace.
\]
We define the \textit{semi-Cartan subgroup mod-$p$} as
\[
D:=D(p):=\left\lbrace \begin{bmatrix}
a&0\\
0&1
\end{bmatrix}\in \GL_2(p)\right\rbrace.
\]
Finally, we use $Z:=Z(p)$ to denote the subgroup of scalar matrices in $\GL_2(p)$. 

Here is the Serre-Dickson classification result. For subgroups $G,H\subseteq \GL_2(p)$, when we write $H\overset{\sim}{\subseteq} G$ we mean that $H$ is contained in $G$ up to conjugacy. 
\begin{theorem}[Classification of subgroups of $\GL_2(p)$]\cite{Dic58, Ser72}\label{Thm_DicksonClassification}
Fix a prime $p\in\Z^+$, and let $G\subseteq \emph{GL}_2(p)$ be a subgroup. Then one of the following holds:
\begin{enumerate}[a.]
\item $G$ contains $\emph{SL}_2(p)$ ($\emph{SL}_2$ case).
\item We have $G\overset{\sim}{\subseteq} B_0(p)$ (Borel case).
\item We have $G\overset{\sim}{\subseteq} N_s(p)$ or $G\overset{\sim}{\subseteq}N_{ns}(p)$ (normalizer of Cartan case).
\item The projective quotient $G/Z\cap G$ is isomorphic to $A_4$, $S_4$ or $A_5$ (exceptional cases).
\end{enumerate}
\end{theorem}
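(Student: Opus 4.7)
My plan is to classify $G$ by analyzing a Sylow $p$-subgroup $P\subseteq G$ together with the induced projective image $\overline{G}:=G/(G\cap Z)$ inside $\textrm{PGL}_2(\F_p)$. The two main branches are $P\neq 1$ versus $P=1$; in the latter case classical work of Dickson classifies the $p'$-subgroups of $\textrm{PGL}_2$ and I can then lift back to $\GL_2(p)$.

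Suppose first that $P\neq 1$. Every element of order $p$ in $\GL_2(p)$ is unipotent, since $p$ is the residue characteristic of $\F_p$; after a suitable conjugation $P$ lies in the strictly upper-triangular matrices and fixes a unique line $L\subseteq \F_p^2$. If every element of $G$ stabilizes $L$, then $G\subseteqc B_0(p)$ and we are in the Borel case (b). Otherwise there is some $g\in G$ with $g(L)\neq L$, and then $gPg^{-1}$ is a second Sylow $p$-subgroup fixing a different line. A short computation shows that the subgroup of $\GL_2(\F_p)$ generated by two unipotent subgroups with distinct fixed lines equals $\SL_2(p)$, placing us in case (a).

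Suppose instead that $P=1$, so $\gcd(|G|,p)=1$. Dickson's classification of finite $p'$-subgroups of $\textrm{PGL}_2$ over an algebraically closed field of characteristic $p$ forces $\overline{G}$ to be cyclic, dihedral, or one of the exceptional groups $A_4$, $S_4$, $A_5$. If $\overline{G}$ is cyclic then $G$ is abelian of order prime to $p$, so its elements are commuting semisimple matrices and are simultaneously diagonalizable over $\overline{\F_p}$; depending on whether the common eigenlines are defined over $\F_p$ or only over $\F_{p^2}$, one obtains $G\subseteqc C_s(p)$ or $G\subseteqc C_{ns}(p)$. For dihedral $\overline{G}$ the same analysis applied to the cyclic index-$2$ subgroup places $G$ inside $N_s(p)$ or $N_{ns}(p)$, giving case (c). The three exceptional projective images exhaust case (d).

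The main obstacle is the proof, in the case $P\neq 1$, that two distinct Sylow $p$-subgroups really generate all of $\SL_2(p)$: one must produce compatible generators of the form $\bigl(\begin{smallmatrix}1&t\\0&1\end{smallmatrix}\bigr)$ and $\bigl(\begin{smallmatrix}1&0\\s&1\end{smallmatrix}\bigr)$ lying in $G$ and invoke the standard fact that these elementary matrices generate $\SL_2(\F_p)$ for $p\geq 3$. A secondary subtlety is the descent from $\overline{\F_p}$ to $\F_p$ in the $p'$-case: the split versus nonsplit dichotomy for the Cartan arises precisely from whether the $\Gal(\F_{p^2}/\F_p)$-action fixes each of the common eigenlines or swaps them.
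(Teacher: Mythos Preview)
The paper does not prove this theorem; it is stated with attribution to Dickson and Serre and then used as a black box, so there is no argument in the paper to compare your proposal against.

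Your outline is the standard route and is essentially sound. Two small points deserve tightening. First, in the $P\neq 1$ branch, once you have two Sylow $p$-subgroups fixing distinct lines you may change basis so that they become the full upper and lower unipotent groups, and these generate $\SL_2(\F_p)$ for every prime $p$ (including $p=2$), not only $p\geq 3$. Second, in the dihedral branch you should justify that $G$ normalizes the Cartan containing the abelian index-$2$ preimage $C$: when $C\not\subseteq Z$, pick a nonscalar $c\in C$; the Cartan is exactly the centralizer of $c$, and since $gcg^{-1}\in C$ is again nonscalar with the same centralizer, conjugation by $g\in G$ fixes the Cartan. The degenerate case $C\subseteq Z$ forces $\lvert\overline{G}\rvert\leq 2$, which already falls under your cyclic analysis.
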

\subsection{Bounds on the level of Cartan Galois images}
To wrap this section up, we will describe a result of Larson and Vaintrob on isogeny characters of elliptic curves; this will give us uniform bounds on prime levels $p$ of abelian $p$-division fields, which is Theorem \ref{Thm_UniformBds_OnPrimeLevel_OfAbelianDivisionFields}. Recall that for an elliptic curve $E/F$, a finite cyclic subgroup $C\subseteq E(\oQ)$ is \textit{$F$-rational} if it is stable under the action of $G_F$, i.e., for each $R\in C$ and $\sigma\in G_F$ one has $\sigma(R)\in C$. Setting $n:=|C|$, the associated representation
\[
r\colon G_F\rightarrow \Aut(C)\cong (\Z/n\Z)^\times
\]
is called the \textit{isogeny character of $C$.} Explicitly, writing $C=\langle P\rangle$ one has for each $\sigma\in G_F$ that
\begin{align*}
r(\sigma):=a_\sigma&&\text{where }\sigma(P)=a_\sigma P.
\end{align*}
\begin{theorem}\cite[Theorem 1]{LV14}\label{Thm_LV}
For a number field $F$, there exists an effectively computable set $S_F$ of prime numbers such that the following holds. Let $p$ be a prime not in $S_F$, and let $E/F$ be an elliptic curve such that $E[p]\otimes_{\F_p} \overline{\F_p}$ is reducible, say with degree one associated character $r\colon G_F\rightarrow \overline{\F_p}^\times$. Then one of the following holds.
\begin{enumerate}[1.]
\item There exists a CM elliptic curve $E'/F$ with CM field $K\subseteq F$, and associated characters $\psi,\psi'\colon G_F\rightarrow\overline{\F_p}^\times$ such that up to conjugation
\[
\rho_{E',p}(G_F)\otimes_{\F_p}\overline{\F_p}=\emph{im}\begin{bmatrix}
\psi&0\\
0&\psi'
\end{bmatrix},
\]
where $\psi^{12}=r^{12}$.
\item GRH fails for $F[\sqrt{-p}]$, and one has
\[
r^{12}=\chi_p^6.
\]
Moreover, $E$ has an $F$-rational $p$-isogeny and $p\equiv 3\pmod 4$.
\end{enumerate}
\end{theorem}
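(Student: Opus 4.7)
The plan is to refine Momose's classical theorem on isogeny characters using effective analytic input. Given the isogeny character $r\colon G_F\to\overline{\F_p}^\times$ associated to the one-dimensional Galois subrepresentation of $E[p]\otimes_{\F_p}\overline{\F_p}$, I would first analyze its restriction to inertia subgroups $I_v$ at primes $v\mid p$ of $F$. By Raynaud's theorem on finite flat group schemes of type $(p,p)$ together with Serre's classification of tame fundamental characters, the tame restriction $r|_{I_v^t}$ is an explicit power of either a level-one or a level-two fundamental character, with exponents bounded in terms of the absolute ramification index $e_v$. Raising to the twelfth power eliminates the level-two possibilities and produces a formula
\[
r^{12}|_{I_v} = \chi_p^{a_v}|_{I_v}
\]
for integers $a_v$ determined by local reduction data, so that $r^{12}\cdot\chi_p^{-a}$ is unramified at every $v\mid p$ for a suitable global exponent $a\in\Z$.

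Globally, this identifies $r^{12}$ with the mod-$p$ reduction of a Grossencharacter on $F$ whose conductor is controlled by the conductor of $E$ together with primes above $p$. I would then invoke the Lagarias--Odlyzko effective Chebotarev density theorem, conditional on GRH for the imaginary quadratic extension $F[\sqrt{-p}]$, to produce a prime $\mathfrak{q}$ of $F$ of small norm in a prescribed Frobenius class. Comparing $r^{12}(\mathrm{Frob}_\mathfrak{q})$ to the Hasse trace of $E$ at $\mathfrak{q}$ forces a rigid dichotomy (for $p$ outside an effectively computable set $S_F$): either $r^{12}$ matches the reduction of a Hecke character of CM type, or $r^{12}=\chi_p^6$ exactly and the Chebotarev step must have failed because GRH fails for $F[\sqrt{-p}]$.

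These alternatives realize the two cases of the theorem. In case (1), the matching Hecke character $\psi$ on $F$ takes values in a CM field $K$; by the Shimura--Taniyama theory of CM abelian varieties, $\psi$ is realized as the $p$-adic isogeny character of a CM elliptic curve $E'/F$, and the equation $\psi^{12}=r^{12}$ holds by construction. The containment $K\subseteq F$ is forced because $\psi$ was built as a character of $G_F$ itself rather than of a larger Galois group. In case (2), the Momose identity $r^{12}=\chi_p^6$ holds; the $F$-rational $p$-isogeny is given directly by the $F$-stable $p$-subgroup on which $r$ acts, while the congruence $p\equiv 3\pmod 4$ is extracted from parity constraints on the exponents $(a_v)_v$ that identify $F[\sqrt{-p}]$ (rather than $F[\sqrt{p}]$) as the relevant quadratic extension where the Chebotarev step must fail.

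The principal obstacle is case (1): promoting the abstract Grossencharacter data into an honest CM elliptic curve $E'/F$ requires verifying that $\psi$ has the correct infinity type to come from an elliptic curve (rather than a higher-dimensional CM abelian variety) and that the CM field $K$ genuinely embeds in $F$. Additionally, constructing $S_F$ effectively demands explicit bounds at every step -- effective Chebotarev thresholds, bounds on conductors of possible Grossencharacters on $F$, and small-prime exceptions where character-counting collapses -- all of which are finite in number and depend only on arithmetic invariants of $F$.
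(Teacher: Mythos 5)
This statement is not proved in the paper at all: it is quoted verbatim from \cite[Theorem 1]{LV14} (Larson--Vaintrob) and used as a black box, so there is no internal proof to compare your attempt against. Measured instead against the actual Larson--Vaintrob argument, your outline follows its main lines correctly: the local analysis of $r|_{I_v}$ at primes $v\mid p$ via Raynaud's theorem and the fundamental characters, passing to $r^{12}$ to eliminate level-two characters and normalize the inertial exponents, the global interpolation of $r^{12}$ by (the reduction of) an algebraic Hecke character of controlled conductor, and an effective Chebotarev argument conditional on GRH for $F[\sqrt{-p}]$ that produces the dichotomy between the CM case and the identity $r^{12}=\chi_p^6$.

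That said, what you have written is a proof plan rather than a proof, and the steps you defer are exactly where the substance of \cite{LV14} lies. First, promoting the interpolating character to an honest CM elliptic curve $E'/F$ with CM field $K\subseteq F$ requires verifying the infinity type and bounding the conductor of the candidate Hecke characters effectively in terms of $F$ alone; this is not routine and is the heart of making $S_F$ effectively computable. Second, the comparison at the auxiliary prime $\mathfrak{q}$ needs the Weil bounds to lift the mod-$p$ congruence $r^{12}(\mathrm{Frob}_{\mathfrak{q}})\equiv\alpha^{12}$ to an exact identity of algebraic numbers, which is what forces $p$ to be large relative to $N\mathfrak{q}$ and hence drives the definition of $S_F$. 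Third, the congruence $p\equiv 3\pmod 4$ is not a vague ``parity constraint'': it comes from the fact that the obstruction character cuts out the quadratic field $\Q(\sqrt{-p})$, which is the quadratic subfield of $\Q(\zeta_p)$ precisely when $p\equiv 3\pmod 4$, and this is what implicates GRH for $F[\sqrt{-p}]$ rather than $F[\sqrt{p}]$. Your sketch could be completed into the Larson--Vaintrob proof, but only by importing essentially all of their effective machinery; as the paper itself simply cites the result, the honest conclusion is that your proposal reconstructs the strategy but does not constitute an independent proof.
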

Let us explain how Theorem \ref{Thm_LV} gives uniform bounds on the level of absolutely reducible mod-$p$ Galois images. Fix an elliptic curve $E/F$ and a prime $p\in\Z^+$, and set $G:=\rho_{E,p}(G_F)$. If $G\subseteqc B_0(p)$, then $E$ has an $F$-rational $p$-isogeny, i.e., there exists a $G_F$-stable subspace of $E[p]$. This implies that $E[p]$ is reducible, and so if $p\not\in S_F$, then Theorem \ref{Thm_LV} implies that $F$ contains a CM field or that GRH fails for $F[\sqrt{-p}]$. Similarly, if $G\subseteq C_{ns}(p)$, then $E[p]$ is irreducible over $\F_p$ but reducible over $\F_{p^2}$ (since $G\otimes \F_{p^2}$ is diagonalizable). Therefore, if $G$ is contained in $B_0(p)$, $C_s(p)$ or $C_{ns}(p)$ up to conjugacy, and both GRH is true and $F$ has no rational CM, then $p\in S_F$, and thus $p$ is uniformly bounded.
\section{Galois Representations and the Image of Inertia}
In this section, we will prove various results on Galois representations of elliptic curves. In particular, we will provide alternate group-theoretic characterizations of cyclotomic and abelian division fields of elliptic curves, and give a description of the mod-$p$ images of inertia subgroups for elliptic curves over local fields, following \cite{Ser72}. We will also give several explicit descriptions of abelian subgroups of $\GL_2(p)$.
\subsection{An alternate description of cyclotomic and abelian division fields}
Let us first characterize cyclotomic and abelian division fields in terms of images of Galois representations. The main result of this paper is Theorem \ref{Thm_UniformBds_OnPrimeLevel_OfSmallDivisionFields}, which says that for a number field $F$, for sufficiently large primes $p\gg_F 0$ and for all elliptic curves $E/F$, one has $F(E[p])\neq F(\zeta_p)$.
The guiding philosophy is that the mod-$p$ Galois image is ``relatively uniformly large" when $p$ is uniformly large. 
This has been demonstrated over $\Q$ by progress towards Serre's uniformity question \cite{LFL21, FL}, and over numbers fields without rational CM \cite{Gen24, IK24, Gen25}. 

Here is our characterization of cyclotomic and abelian division fields in terms of Galois representations. Note that a cyclotomic division field is automatically abelian.
\begin{proposition}\label{Prop_ConditionForAbelianAndCyclotomic}
Let $F/\Q$ be an algebraic extension, $E/F$ an elliptic curve and $n\in\Z^+$. Set $G:=\rho_{E,n}(G_F)$.
\begin{enumerate}[a.]
\item One has that $G$ is abelian if and only if $F(E[n])/F$ is abelian.
\item One has that $G\cap \emph{SL}_2(n)=1$ iff $F(E[n])=F(\zeta_n)$. In such a case, one has that $G$ is abelian, with $\det\colon G\rightarrow (\Z/n\Z)^\times$ being an injection.
\end{enumerate}
\end{proposition}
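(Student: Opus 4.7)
The plan is to build everything on two facts already laid out in the preliminaries: (i) the kernel of $\rho_{E,n}\colon G_F\to \GL_2(n)$ is $\Gal(\oQ/F(E[n]))$, so $\rho_{E,n}$ induces a group isomorphism $\Gal(F(E[n])/F)\xrightarrow{\sim} G$; and (ii) the Weil-pairing identity $\det\circ\,\rho_{E,n}=\chi_n$, together with the observation that $\ker\chi_n=\Gal(\oQ/F(\zeta_n))$.

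For part (a), I would simply quote (i): since $G$ is the image (and indeed an isomorphic copy) of the Galois group $\Gal(F(E[n])/F)$, one is abelian if and only if the other is.

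For part (b), the goal is to identify $G\cap\SL_2(n)$ with a concrete Galois subgroup. By (ii), a matrix $\rho_{E,n}(\sigma)\in G$ lies in $\SL_2(n)$ iff $\chi_n(\sigma)=1$ iff $\sigma\in\Gal(\oQ/F(\zeta_n))$. Since $F(\zeta_n)\subseteq F(E[n])$, restricting the isomorphism from (i) gives
\[
\Gal(F(E[n])/F(\zeta_n))\;\xrightarrow{\sim}\;G\cap\SL_2(n).
\]
Applying the Galois correspondence to the tower $F\subseteq F(\zeta_n)\subseteq F(E[n])$, the left-hand group is trivial precisely when $F(E[n])=F(\zeta_n)$, which yields the stated equivalence. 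For the final assertion, if $G\cap\SL_2(n)=\ker(\det|_G)=1$, then $\det\colon G\hookrightarrow (\Z/n\Z)^\times$ is injective, so $G$ embeds in an abelian group and is itself abelian.

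I do not expect a genuine obstacle here: the whole argument is bookkeeping with Galois correspondence, the cyclotomic character, and the Weil-pairing determinant relation. The only point requiring mild care is to cleanly pass from the condition $\chi_n(\sigma)=1$ on $G_F$ down to the finite Galois group $\Gal(F(E[n])/F)$, which is handled by choosing the tower $F\subseteq F(\zeta_n)\subseteq F(E[n])$ and restricting the isomorphism from step (i).
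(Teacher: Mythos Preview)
Your proposal is correct and follows essentially the same approach as the paper: both arguments use the isomorphism $\Gal(F(E[n])/F)\xrightarrow{\sim} G$ for part (a), and for part (b) both identify $G\cap\SL_2(n)$ with $\Gal(F(E[n])/F(\zeta_n))$ via the Weil-pairing relation $\det\circ\rho_{E,n}=\chi_n$, then invoke Galois correspondence. The only cosmetic difference is that the paper phrases the identification by computing the fixed field of $G\cap\SL_2(n)$, whereas you phrase it by pulling back $\ker\chi_n$; these are the same computation.
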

\begin{proof}
First, let us observe that the mod-$n$ Galois representation
\[
\rho_{E,n}\colon G_F\rightarrow \GL_2(n)
\]
has kernel $\Gal(\oQ/F(E[n]))$. Modding out by this kernel induces a faithful representation
\[
\rho_{E,n}\colon \Gal(F(E[n])/F)\hookrightarrow \GL_2(n),
\]
and thus we have $G\cong \Gal(F(E[n])/F)$. In particular, we have that $G$ is abelian if and only if $F(E[n])/F$ is abelian. This proves part a.

For part b., observe that the kernel of the determinant map $\det\colon G\rightarrow (\Z/n\Z)^\times$ is $G\cap \SL_2(n)$, and thus $G\cap \SL_2(n)$ can be identified with a subgroup of $\Gal(F(E[n])/F)$ whose fixed field $K$ is Galois over $F$. We claim that $K=F(\zeta_n)$. To see this, by the $n$-Weil pairing one has for all $\sigma\in G_F$ that
\[
\sigma(\zeta_n)=\zeta_n^{\det\rho_{E,n}(\sigma)}.
\]
It follows that an automorphism $\sigma\in G_F$ fixes $\zeta_n$ if and only if $\rho_{E,n}(\sigma)\in G\cap \SL_2(n)$, iff $\sigma$ fixes $K$. We deduce that $K=F(\zeta_n)$, and so
\[
G\cap \SL_2(n)\cong \Gal(F(E[n])/F(\zeta_n)).
\]
Therefore, we conclude that $G\cap \SL_2(n)=1$ iff $F(E[n])=F(\zeta_n)$. In this case, the kernel of $\det\colon G\rightarrow (\Z/n\Z)^\times$ is trivial. 
\end{proof}
\begin{remark}
Proposition \ref{Prop_ConditionForAbelianAndCyclotomic} shows that if an elliptic curve $E/F$ has a cyclotomic $n$-division field, then $\rho_{E,n}(G_F)$ is isomorphic to its image $\det\rho_{E,n}(G_F)$. If each prime $p\mid n$ is unramified in $F$, then one can show that the mod-$n$ cyclotomic character $\chi_n\colon G_F\rightarrow (\Z/n\Z)^\times$ is surjective, so from $\det\rho_{E,n}=\chi_n$ we conclude that $\rho_{E,n}(G_F)$ is isomorphic to $(\Z/n\Z)^\times$ under the determinant map.
\end{remark}
\subsection{The image of inertia \`{a} la Serre}
Let us review Serre's work on describing the image of the inertia subgroup under a mod-$p$ Galois representation of an elliptic curve \cite{Ser72}.
Fix a prime $p\in\Z^+$, and let $K$ be an algebraic extension of $\Q_p$; let $k$ denote the residue field of $K$. Also let $K_\nr$ denote the maximal unramified extension of $K$. Then the \textit{inertia group of $K$} is $I:=I_K:=\Gal(\overline{K}/K_{\nr})$. The inertia group fits into a short exact sequence
\[
1\rightarrow I\rightarrow G_K\rightarrow G_k\rightarrow 1.
\]
Let $K_t$ denote the maximal tamely ramified extension of $K$; thus $K\subseteq K_{\nr}\subseteq K_t$. Let $I_{p}:=I_{K,p}:=\Gal(\overline{K}/K_t)$ denote the \textit{wild inertia group of $K$,} and $I_{t}:=I_{K,t}:=I/I_p\cong \Gal(K^t/K^\nr)$ the \textit{tame inertia group of $K$.} We note that $I_p$ is the largest pro-$p$-group in $I$.

More can be said about $I_t$. Fixing a uniformizer $\pi$ of $K_\nr$, we know that $K_t$ is a compositum of the Galois extensions $K(\sqrt[d]{\pi})/K$ where each $d\in \Z^+$ is coprime to $p$. Therefore, $I_t$ can be identified as an inverse limit 
\[
I_t=\varprojlim_{d:\gcd(p,d)=1}\Gal(K(\sqrt[d]{\pi})/K).
\]
Thus, $I_t$ is a pro-cyclic group.
For each $d$ considered above, we have a natural isomorphism $\theta_d\colon \Gal(K(\sqrt[d]{\pi})/K)\xrightarrow{\sim} \mu_d$, where $\mu_d$ is the subgroup of $d$'th roots of unity in $\overline{\Q_p}$ (which is isomorphic to the subgroup of $d$'th roots of unity in $\overline{\F_p}$). In particular, we have an induced surjection $\theta_d\colon I_t\twoheadrightarrow \mu_d$. When $d=p^r-1$ for some $r>0$, we can identify $\mu_d\cong \F_{p^r}^\times$; in this case, we call $\theta_{p^r-1}\colon I_t\rightarrow \F_{p^r}^\times$ a \textit{fundamental character of level $r$.} These characters describe the action of the tame inertia group on $p$-torsion subgroups of elliptic curves \cite{Ser72}.

The following result is essentially from \cite{Ser72}, which was proven in the semistable case where the absolute ramification index $e:=e(K/\Q_p)$ is trivial; however, Serre essentially gave the ingredients for the proof where $e>1$. Lozano-Robledo proved an analogous result for elliptic curves over $\Q$ which are allowed to have additive reduction \cite{LR13}. Here, we state Serre's result as a mild generalization of both cases, where both additive reduction and a nontrivial absolute ramification index are allowed.
\begin{theorem}\cite[Theorem 3.2]{LR13}\label{Thm_ImageOfInertia}
Fix a prime $p\in\Z^+$ and a finite extension $K/\Q_p$ of degree $d$. Let $E/K$ be an elliptic curve. Then there exists a finite extension $L/K$ such that $E/L$ is semistable, whose ramification index satisfies $e:=e(L/K)\in \lbrace 1,2,3,4,6\rbrace$.

Let $e_0:=e(K/\Q)$ denote the absolute ramification index of $K$.
Let $I$ denote the inertia group of $L$, and $I_p$ the wild inertia group.
\begin{enumerate}[a.]
\item Suppose that $E/L$ has good ordinary or bad multiplicative reduction. Then $E$ has an $L$-rational $p$-isogeny. Fix a basis of $E[p]$ such that $\rho_{E,p}(G_L)$ is upper triangular. Then we have the following:
\begin{enumerate}[i.]
\item If the wild inertia group $I_p$ acts trivially on $E[p]$, then the mod-$p$ image of $I$ is semi-Cartan. More precisely, we have 
\[
\rho_{E,p}(I)=\emph{im}\begin{bmatrix}
\theta_{p-1}^{ee_0}&0\\
0&1
\end{bmatrix}=D^{ee_0}.
\]
\item If $I_p$ acts nontrivially on $E[p]$, then we have
\[
\rho_{E,p}(I)=\left\langle \emph{im}\begin{bmatrix}
\theta_{p-1}^{ee_0}&0\\
0&1
\end{bmatrix},\gamma\right\rangle=\left\langle D^{ee_0},\gamma\right\rangle,
\]
where $\gamma:=\begin{bmatrix}
1&1\\
0&1
\end{bmatrix}$. 
\end{enumerate}
\item Suppose that $E/L$ has good supersingular reduction.
\begin{enumerate}[i.]
\item If $I_p$ acts trivially on $E[p]$, then up to conjugacy the mod-$p$ image of $I$ is the $ee_0$'th power of the non-split Cartan group mod-$p$, i.e., we have
\[
\rho_{E,p}(I)=C_{ns}(p)^{ee_0}
\]
for an appropriate choice of basis for $E[p]$.
\item If $I_p$ acts nontrivially on $E[p]$, then with respect to an appropriate basis, we have that $\rho_{E,p}(I)\subseteq B_0(p)$. Furthermore, we have $\gamma\in \rho_{E,p}(I)$, and can write the semisimplification of $\rho_{E,p}(I)$ as
\[
\rho_{E,p}(I)^{\emph{ss}}=\emph{im}\begin{bmatrix}
\theta_{p-1}^{ee_0-e'}&0\\
0&\theta_{p-1}^{e'}
\end{bmatrix},
\]
where $e':=v(a_{p})$ is the valuation of the $p$'th coefficient $a_p$ in the formal group expansion of the multiplication-by-$p$ map on $E$, which satisfies $0\leq e'\leq ee_0$. In particular, we have $D^{(6d)!}\subseteq \rho_{E,p}(I)$.
\end{enumerate}
\end{enumerate}
\end{theorem}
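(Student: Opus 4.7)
The plan is to adapt Serre's original arguments for the semistable, $e_0=1$ case together with Lozano-Robledo's treatment of additive reduction over $\Q$ to the present, more general setting. The first step is to produce the finite extension $L/K$ over which $E$ acquires semistable reduction; this is standard from the N\'{e}ron-Ogg-Shafarevich criterion and the Serre-Tate theorem, and one may take $L=K(E[\ell])$ for any prime $\ell\geq 3$ coprime to $p$. The inertial action on the component group of the N\'{e}ron special fiber is then a finite cyclic group whose order lies in $\{1,2,3,4,6\}$ (matching the Kodaira symbols for additive reduction), yielding the claimed bound on $e(L/K)$.

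For part (a), bad multiplicative reduction is handled via Tate's uniformization $E/L \cong \overline{L}^\times/q^{\Z}$: in the basis $\{\zeta_p, q^{1/p}\}$ of $E[p]$, the representation $\rho_{E,p}(G_L)$ is upper triangular with diagonal characters $(\chi_p, 1)$. On inertia, $\chi_p$ factors through the tame quotient and, pulled back along $I_{t,L}\hookrightarrow I_{t,\Q_p}$ (which scales fundamental characters by $e(L/\Q_p)$), coincides with $\theta_{p-1}^{ee_0}$. The good ordinary case is the same argument applied to the canonical subgroup $\hat{E}[p]$ (kernel of reduction) in place of $\mu_p\subseteq E[p]$, whose quotient is \'{e}tale and so trivial on inertia. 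Since $I_p$ is pro-$p$, its image inside the upper triangular subgroup of $\GL_2(p)$ lies in the unipotent part $\langle \gamma\rangle$, and is either trivial (giving (a)(i)) or generated by $\gamma$ after an appropriate basis change preserving the filtration (giving (a)(ii)).

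For part (b), good supersingular reduction forces the formal group $\hat{E}/L$ to have height $2$, so $\hat{E}[p]$ is a rank-one $\F_{p^2}$-module on which $I$ acts through a character $I_t \rightarrow \mathrm{End}(\hat E[p])^\times \cong \F_{p^2}^\times$; the regular representation of $\F_p[\sqrt{\epsilon}]$ then embeds $\rho_{E,p}(I)$ inside $C_{ns}(p)$, and the character is the $(ee_0)$'th power of a fundamental character of level $2$, giving (b)(i). When $I_p$ acts nontrivially, the representation becomes reducible, and one passes to the filtration coming from the connected/\'{e}tale sequence of the $p$-divisible group attached to the formal group law; the semisimplification has diagonal characters $\theta_{p-1}^{ee_0-e'}$ and $\theta_{p-1}^{e'}$ with $e'=v(a_p)$ and $0\leq e'\leq ee_0$. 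Containment of $\gamma$ follows as in part (a), and $D^{(6d)!}\subseteq \rho_{E,p}(I)$ holds because $ee_0\leq 6d$, so raising diagonal entries to the $(6d)!$'th power kills the lower-right character $\theta_{p-1}^{e'}$ while leaving enough of the upper-left entry to realize all of $D^{(6d)!}$.

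The principal obstacle is the bookkeeping of the absolute ramification index $e_0$: Serre's original arguments are written for $e_0=1$, so throughout one must systematically replace $\theta_{p-1}$ and $\theta_{p^2-1}$ on $I_{t,\Q_p}$ by their pullbacks $\theta_{p-1}^{e_0}, \theta_{p^2-1}^{e_0}$ on $I_{t,K}$, and then again by $\theta_{p-1}^{ee_0}, \theta_{p^2-1}^{ee_0}$ on $I_{t,L}$. The identity $e'=v(a_p)$ with $0\leq e'\leq ee_0$ similarly requires checking that the underlying Hasse-invariant / Frobenius-trace analysis from \cite{LR13} carries through in the ramified setting. Once this tracking is consistent, the theorem is a direct reassembly of arguments essentially already present in \cite{Ser72, LR13}.
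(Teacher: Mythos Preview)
The paper does not give a proof of this theorem at all: it is stated with the citation \cite[Theorem 3.2]{LR13} and attributed to Serre \cite{Ser72} in the semistable unramified case and to Lozano-Robledo for the additive-reduction extension, with the authors explicitly describing it as ``a mild generalization of both cases'' which they only \emph{state}. There is therefore nothing in the paper to compare your proposal against.

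Your outline is the standard route through \cite{Ser72} and \cite{LR13} that the paper is invoking, and the architecture is correct. Two small caveats. First, taking $L=K(E[\ell])$ does give semistable reduction, but that field can be much larger than needed; the bound $e(L/K)\in\{1,2,3,4,6\}$ comes from taking the \emph{minimal} totally ramified extension over which $E$ becomes semistable and identifying its Galois group with the image of inertia in $\Aut(\tilde E)$ (equivalently, reading it off the Kodaira type), not from the component group as you wrote. Second, your one-line justification of $D^{(6d)!}\subseteq \rho_{E,p}(I)$ in (b)(ii) is not quite an argument: knowing $0\le e'\le ee_0\le 6d$ does not by itself force the subgroup generated by $\mathrm{diag}(g^{ee_0-e'},g^{e'})$ and $\gamma$ to contain $\mathrm{diag}(g^{(6d)!},1)$, and one has to be a bit more careful with the arithmetic here. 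These are details rather than structural problems; the plan matches the sources the paper cites.
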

\begin{remark}
As we will see when considering cyclotomic $p$-division fields, a quick size argument rules out the case where wild inertia acts nontrivially. However, wild inertia still needs consideration when studying abelian division fields.
\end{remark}

\subsection{Abelian subgroups of $\GL_2(p)$}
The way we prove Theorems \ref{Thm_UniformBds_OnPrimeLevel_OfSmallDivisionFields} and \ref{Thm_UniformBds_OnPrimeLevel_OfAbelianDivisionFields} involves analyzing the possible ``shapes" of subgroups of $\GL_2(p)$, using the maximal subgroup classification in Theorem \ref{Thm_DicksonClassification}. To this end, we first prove which shapes are permissible for an abelian subgroup of $\GL_2(p)$.

\begin{proposition}\label{Prop_AbelianSubgroupOf_GL2p_Restraints}
Let $p\geq 3$ be prime, and let $G\subseteq \emph{GL}_2(p)$ be a subgroup. \begin{enumerate}[a.]
\item If $G$ is abelian, then $G$ is contained in $B_0(p)$, $N_s(p)$ or $N_{ns}(p)$ up to conjugacy.
\item If $G\subseteqc B_0(p)$, then $G$ is diagonalizable iff $p\nmid \# G$. When $G\subseteq B_0(p)$ is abelian and $p\mid \# G$, one has $G=\langle \gamma, G\cap Z\rangle$ where $\gamma:=\begin{bmatrix}
1&1\\
0&1
\end{bmatrix}$.
\item For an abelian subgroup $G$ that is contained in $N_s(p)$ (resp. $N_{ns}(p)$) but not $C_s(p)$ (resp. $C_{ns}(p)$), one has $G=\langle n, G\cap Z\rangle$ for any element $n\in G\ssm C_s(p)$ (resp $n\in G\ssm C_{ns}(p)$).
\end{enumerate}
\end{proposition}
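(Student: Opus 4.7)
My plan is to handle the three parts separately. For part (a), I apply the Serre-Dickson classification (Theorem \ref{Thm_DicksonClassification}) to $G$. The $\SL_2$ case is excluded because $\SL_2(p)$ is nonabelian for $p \geq 3$, and the exceptional cases $A_4, S_4, A_5$ are excluded because each is nonabelian --- if $G/(G \cap Z)$ were isomorphic to one of them then $G$ itself would be nonabelian. Only the Borel and normalizer-of-Cartan cases remain.

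For (b), the key observation is that any $\begin{bmatrix} a & b \\ 0 & d \end{bmatrix} \in B_0(p)$ is diagonalizable iff its order is coprime to $p$: if $a \neq d$ it has distinct eigenvalues, while if $a = d$ and $b \neq 0$, the expansion $(aI + N)^k = a^k I + k a^{k-1} N$ shows the order is $p \cdot \mathrm{ord}(a)$. Hence if $p \nmid \# G$, every element of $G$ is diagonalizable, and commuting diagonalizable matrices are simultaneously diagonalizable, so $G$ is conjugate into $C_s(p)$; conversely $|C_s(p)| = (p-1)^2$ is coprime to $p$, giving the reverse. For the second assertion, when $p \mid \# G$, I pick $u \in G$ of order $p$; necessarily $u = \begin{bmatrix} 1 & b \\ 0 & 1 \end{bmatrix}$ with $b \neq 0$, and conjugating $G$ within $B_0(p)$ by $\mathrm{diag}(b^{-1}, 1)$ replaces $u$ by $\gamma$. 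Then for any $h = \begin{bmatrix} a & c \\ 0 & d \end{bmatrix} \in G$, the relation $h \gamma = \gamma h$ forces $a = d$, so $h = aI \cdot \gamma^{c/a}$; since $aI = h \gamma^{-c/a} \in G \cap Z$, we conclude $h \in \langle \gamma, G \cap Z \rangle$.

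For (c), I handle the split case first. Fix any $n \in G \ssm C_s(p)$, necessarily antidiagonal. For $g = \mathrm{diag}(x, z) \in G \cap C_s(p)$, comparing the $(1,2)$-entries of $gn$ and $ng$ immediately forces $x = z$, so $g \in Z$. For $g \in G \ssm C_s(p)$, also antidiagonal, the product $gn^{-1}$ of two antidiagonals is diagonal, hence lies in $G \cap C_s(p) \subseteq G \cap Z$ by the previous step, so $g \in n(G \cap Z)$. This yields $G = \langle n, G \cap Z \rangle$. The nonsplit case is parallel but computationally more delicate: one verifies directly that products of two elements of $N_{ns}(p) \ssm C_{ns}(p)$ land in $C_{ns}(p)$, and that commutation of $g \in C_{ns}(p)$ with $n \in N_{ns}(p) \ssm C_{ns}(p)$ forces $g \in Z$ (the key point being that $2 \not\equiv 0$ in $\F_p$ for $p \geq 3$ and $\epsilon$ is a nonsquare). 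I expect the only real obstacle to be this last bookkeeping in the nonsplit case; everything else is essentially a classical group-theory fact combined with a routine $2 \times 2$ matrix computation.
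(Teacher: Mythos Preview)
Your approach is essentially the paper's, and parts (a) and (c) match it closely (the paper packages the index-$2$ coset step in (c) as a separate Lemma~\ref{Lemma_Index2_C_in_N}, but the content is identical). Two small points in part~(b) deserve attention.

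For the first sentence, the statement does not assume $G$ abelian, so your appeal to simultaneous diagonalization of \emph{commuting} matrices needs the additional observation that $[B_0(p),B_0(p)]$ lies in the order-$p$ unipotent subgroup, whence $p\nmid\#G$ already forces $G$ to be abelian. The paper sidesteps this by invoking Maschke's theorem: since $p\nmid\#G$ and $\langle e_1\rangle$ is $G$-stable, there is a complementary $G$-stable line, with no commutativity hypothesis needed.

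For the second sentence, the conjugation by $\mathrm{diag}(b^{-1},1)$ is an unnecessary detour that, as written, proves the claim only for a conjugate of $G$. Since your order-$p$ element $u=\begin{bmatrix}1&b\\0&1\end{bmatrix}$ generates the unique order-$p$ subgroup $\langle\gamma\rangle$ of $B_0(p)$, one has $\gamma\in\langle u\rangle\subseteq G$ literally; your commutation argument then runs for $G$ itself. The paper takes exactly this route, exhibiting $\gamma$ as an explicit power of an element of $G$.
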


Before we prove this, let us prove a small lemma which allows us to analyze the two Cartan cases almost identically.
\begin{lemma}\label{Lemma_Index2_C_in_N}
Suppose that $N$ is a finite group with an index two subgroup $C\norleq N$. Let $G\subseteq N$ be any subgroup, and suppose that $G\not\subseteq C$. Then for any $n\in G\ssm C$ one has $G=\langle n, G\cap C\rangle$, as well as $\langle n\rangle\cap C=\langle n^2\rangle$, $2\mid |n|$ and $[G:G\cap C]=2$.
\end{lemma}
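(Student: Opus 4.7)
The plan is to exploit the fact that an index-two subgroup is automatically normal, reducing everything to a calculation in the quotient $N/C \cong \Z/2\Z$.

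First I would observe that since $[N:C]=2$, the subgroup $C$ is normal in $N$ and the quotient map $\pi\colon N\twoheadrightarrow N/C \cong \Z/2\Z$ is well-defined. Restricting $\pi$ to $G$, the kernel is exactly $G\cap C$, and by hypothesis $G\not\subseteq C$, so $\pi|_G$ is nontrivial. Since the codomain has order two, $\pi|_G$ must be surjective, which immediately gives $[G:G\cap C]=2$.

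Next I would prove $G = \langle n, G\cap C\rangle$ for any $n\in G\ssm C$. The inclusion $\supseteq$ is clear. For $\subseteq$, take any $g\in G$: either $\pi(g)$ is trivial, in which case $g\in G\cap C$, or $\pi(g)=\pi(n)$ (the unique nontrivial class), in which case $n^{-1}g\in G\cap C$ and $g = n(n^{-1}g)\in \langle n, G\cap C\rangle$.

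Finally, for the statements about $\langle n\rangle$: since $n\in G\ssm C$ but $n^2\in C$ (as $\pi(n)^2$ is trivial), if $|n|$ were odd then $\langle n\rangle = \langle n^2\rangle\subseteq C$, contradicting $n\notin C$; hence $2\mid |n|$. The subgroup $\langle n^2\rangle$ then has index two in $\langle n\rangle$ and lies inside $C$, so $\langle n^2\rangle \subseteq \langle n\rangle\cap C\subsetneq \langle n\rangle$, and since $\langle n^2\rangle$ is already of index two, equality $\langle n\rangle\cap C = \langle n^2\rangle$ follows.

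There is no real obstacle here; the argument is purely formal once one notices that the index-two hypothesis makes $C$ normal and reduces everything to computations modulo $C$. The only mild subtlety is the parity argument showing $|n|$ is even, but this is forced by $n\notin C$ combined with $n^2\in C$.
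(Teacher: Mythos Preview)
Your proof is correct and follows essentially the same approach as the paper: both arguments reduce to working modulo $C$ using that $[N:C]=2$. The paper phrases things in terms of cosets and derives $[G:G\cap C]=2$ and $2\mid |n|$ last via a counting formula, whereas you use the quotient map $\pi$ explicitly and obtain $[G:G\cap C]=2$ first, but the substance is the same.
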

\begin{proof}
To begin with, suppose that $n\in G\ssm C$. Observe that for any $g\in G\ssm C$, the cosets $nC$ and $gC$ are equal, and thus $n^{-1}g\in C$. Since $n^{-1}g\in G$, we deduce that $n^{-1}g\in G\cap C$, i.e., $x\in \langle n, G\cap C\rangle$. This proves that $G=\langle n, G\cap C\rangle$.

Let us show that $\langle n\rangle \cap C=\langle n^2\rangle$. Since the coset $(nC)^2=1C$, we have $\langle n^2\rangle \subseteq \langle n\rangle\cap C$. For the other direction, we let $x\in \langle n\rangle\cap C$. From $x\in \langle n\rangle$, we can write $x=n^k$ for some $0\leq k<|n|$. If $k$ is odd, then we can write
\[
xC=(n^{k-1}C)\cdot (nC)=nC,
\]
which contradicts that $n\not\in C$. We deduce that $k$ is even, whence we conclude $\langle n\rangle\cap C=\langle n^2\rangle$. Next, from the formula
\begin{equation*}
|G|=\frac{|n|\cdot |G\cap C|}{|\langle n\rangle \cap (G\cap C)|}=\frac{|n|\cdot |G\cap C|}{|n^2|}
\end{equation*}
and from $|n^2|=\frac{|n|}{\gcd(2,|n|)}$, we also conclude that
\[
|G|=\gcd(2,|n|)\cdot |G\cap C|.
\]
From $G\not\subseteq C$ we have $G\cap C\neq G$, which forces $\gcd(2,|n|)=2$. We conclude that both $2\mid |n|$ and $[G:G\cap C]=2$.
\end{proof}

\begin{proof}[Proof of Proposition \ref{Prop_AbelianSubgroupOf_GL2p_Restraints}]
By Theorem \ref{Thm_DicksonClassification} on the classification of subgroups of $\GL_2(p)$, if $G$ is abelian, up to conjugacy $G$ is a subgroup of $B_0(p)$, $N_s(p)$ or $N_{ns}(p)$. This proves part a. For part b., we note that $\GL_2(p)$ acts by left multiplication on the $\F_p$-vector space $V:=\F_p\langle e_1,e_2\rangle$, where $e_1:= \begin{bmatrix}
1\\
0
\end{bmatrix}$ and $e_2:=\begin{bmatrix}
0\\
1
\end{bmatrix}$. Thus, any subgroup $G\subseteq \GL_2(p)$ also acts on $V$. By Maschke's Theorem, if $p\nmid \# G$ then $V$ is a semisimple $\F_p[G]$-module; in particular, any $\F_p[G]$-submodule $W\subseteq V$ has a complementary $\F_p[G]$-submodule $X\subseteq V$, so that $V=W\oplus X$. Thus, if $p\nmid \# G$ then $V$ has a $G$-stable subspace if and only if $G$ is diagonalizable. Therefore, if we assume that $G\subseteq B_0(p)$, then $\langle e_1\rangle$ is a $G$-stable subspace of $V$, whence we conclude that $p\nmid \# G$ iff $G$ is diagonalizable.

Suppose then that $G\subseteq B_0(p)$ is abelian and $p\mid\# G$. In the following two paragraphs, we will show that $G=\langle \gamma,G\cap Z\rangle$. First, we show $\gamma\in G$: by the preceding paragraph, we have $G\not\subseteq C_s(p)$, and so there exists $m=\begin{bmatrix}
a&b\\
0&d
\end{bmatrix}\in G$ with $b\neq 0$. We claim that we may choose $m\in G$ such that $a=d$: if this were not the case, then all such matrices in $G$ have distinct entries on their diagonal, and are thus individually diagonalizable; since $G$ is abelian, this then implies that all elements of $G$ are simultaneously diagonalizable, see e.g. \cite{Con} -- this is impossible. Thus, we may assume that $a=d$. It follows that $m^{p-1}=\begin{bmatrix}
1&e\\
0&1
\end{bmatrix}$ where $e:=(p-1)a^{p-2}b\neq 0$. Then choosing $x\in \Z$ with $ex\equiv 1\pmod p$, we deduce that $m^{(p-1)x}=\gamma\in G$. 

Next, we claim that $G=\langle G\cap C_s(p), \gamma\rangle$. This follows from the fact that for any $m=\begin{bmatrix}
a&b\\
0&d
\end{bmatrix}\in G$ with $b\neq 0$, one has
\[
m=\begin{bmatrix}
a&0\\
0&d
\end{bmatrix}\cdot \gamma^{x}
\]
where $x\in\Z$ is chosen so that $x\equiv a^{-1}b\pmod p$. This shows that $G\subseteq \langle G\cap C_s(p), \gamma\rangle$, whence the claim follows.
Finally, one checks for any $m=\begin{bmatrix}
a&0\\
0&d
\end{bmatrix}\in G$ that commutativity $\gamma \cdot m=m\cdot \gamma$ forces $a=d$, and thus $G\cap C_s(p)=G\cap Z$. We conclude that $G=\langle \gamma, G\cap Z\rangle$, which proves part b.

For part c., we first assume that $G\subseteq N_s(p)$ with $G\not\subseteq C_s(p)$. Then by Lemma \ref{Lemma_Index2_C_in_N}, we have for any element $n\in G\ssm C_s(p)$ that $G=\langle n, G\cap C_s(p)\rangle$. It suffices to show that $G\cap C_s(p)=G\cap Z$. This is similar to part b.: writing $n=\begin{bmatrix}
0&b\\
c&0
\end{bmatrix}$, we have for all $m=\begin{bmatrix}
a&0\\
0&d
\end{bmatrix}\in G\cap C_s(p)$ that $nm=mn$, which forces $a=d$. This proves that $G=\langle n, G\cap Z\rangle$. The case where $G\subseteq N_{ns}(p)$ with $G\not\subseteq C_{ns}(p)$ is near identical by Lemma \ref{Lemma_Index2_C_in_N}: one checks here that for any $n=\begin{bmatrix}
a&b\epsilon \\
-b&-a
\end{bmatrix}\in G$ and $m=\begin{bmatrix}
c&d\epsilon\\
d&c
\end{bmatrix}\in G\cap C_{ns}(p)$, one has from $nm=mn$ that $d=0$, and thus $m\in G\cap Z$, which shows that $G=\langle n, G\cap C_{ns}(p)\rangle =\langle n, G\cap Z\rangle$.
\end{proof}
In our proofs of Theorems \ref{Thm_UniformBds_OnPrimeLevel_OfSmallDivisionFields} and \ref{Thm_UniformBds_OnPrimeLevel_OfAbelianDivisionFields}, we will often need to argue that the image of inertia is an explicitly known subgroup of $\rho_{E,p}(G_F)$, not just up to conjugacy. The following lemma lets us do this via the classification of subgroups of $\GL_2(p)$ given in Theorem \ref{Thm_DicksonClassification}. This lemma focuses on the case where the image contains a conjugate of the power of the semi-Cartan group or nonsplit Cartan subgroup mod-$p$.
\begin{lemma}\label{Lemma_SemiCartanAndNonsplitCartan_inAbelianSubgps}
Fix a prime $p\geq 3$ and an integer $k>0$. Set $\gamma:=\begin{bmatrix}
1&1\\
0&1
\end{bmatrix}\in \emph{GL}_2(p)$.
\begin{enumerate}[a.]
\item If $D^k\subseteqc \langle \gamma, Z\rangle$ then $p-1\mid k$.
\item For any subgroup $G\subseteq C_s(p)$, if $D^k\subseteqc G$ and $p-1\nmid k$ then $D^k\subseteq G$ or $D^k_f\subseteq G$. If $D^k\subseteqc N_s(p)\ssm C_s(p)$, then $p-1\mid 2k$.
\item If $D^k\subseteqc C_{ns}(p)$ then $p-1\mid k$, and if $D^k\subseteqc N_{ns}(p)$ then $p-1\mid 2k$.
\item If $C_{ns}(p)^k\subseteqc \langle \gamma, Z\rangle$ then $p+1\mid \gcd(k, p^2-1)$.
\end{enumerate}
\end{lemma}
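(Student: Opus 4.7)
My plan is to use a uniform \textbf{eigenvalue analysis}: since conjugation preserves eigenvalues, I can compare the eigenvalues of elements in each target subgroup to those of elements in $D^k$ or $C_{ns}(p)^k$, and all four claims will fall out of such comparisons. Fix a generator $\omega$ of $\F_p^\times$. Then $D^k$ is cyclic, generated by $A := \begin{bmatrix}\omega^k & 0 \\ 0 & 1\end{bmatrix}$, whose eigenvalues are $\alpha := \omega^k$ and $1$. The general principle I will use repeatedly is: if $gD^kg^{-1} \subseteq S$ for some subgroup $S \subseteq \GL_2(p)$, then the generator $gAg^{-1}$ itself lies in $S$, so constraints on $\alpha$ can be read off directly from the structure of $S$.

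For part (a), every element of $\langle \gamma, Z\rangle$ has the form $\begin{bmatrix}\lambda & \lambda j \\ 0 & \lambda\end{bmatrix}$ and thus has repeated eigenvalue $\lambda$, so forcing $\alpha = 1$ yields $\omega^k = 1$, i.e., $p-1 \mid k$. For part (b), first claim: $p-1 \nmid k$ gives $\alpha \neq 1$, so $A$ has distinct eigenvalues, and any conjugate of $A$ lying in $C_s(p)$ is one of the two orderings of a diagonal matrix with entries $\alpha, 1$---namely $A$ or its flip $A_f$. Thus the conjugate of $D^k$ in $G$ equals $D^k$ or $D^k_f$. For the second claim, if $gD^kg^{-1} \subseteq N_s(p)$ but $gD^kg^{-1} \not\subseteq C_s(p)$, then $gAg^{-1}$ itself cannot lie in $C_s(p)$ (otherwise so would all its powers), so $gAg^{-1}$ is antidiagonal with trace $0$ and eigenvalues $\pm\sqrt{bc}$. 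Matching $\{\alpha,1\} = \{\sqrt{bc},-\sqrt{bc}\}$ forces $\alpha = -1$, giving $p-1 \mid 2k$.

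For part (c), an element $\begin{bmatrix}a & b\epsilon \\ b & a\end{bmatrix}$ of $C_{ns}(p)$ has discriminant $4b^2\epsilon$, and since $\epsilon$ is a non-square in $\F_p^\times$ its eigenvalues lie in $\F_p$ only when $b = 0$, i.e., for scalar elements. Since $A$ has the rational eigenvalue $1$, the conjugate $gAg^{-1}$ must be scalar, forcing $\alpha = 1$ and $p-1 \mid k$. For $N_{ns}(p)$, elements of $N_{ns}(p) \ssm C_{ns}(p)$ have the form $\begin{bmatrix}a & b\epsilon \\ -b & -a\end{bmatrix}$ with trace $0$; the same generator argument yields either $gD^kg^{-1} \subseteq C_{ns}(p)$ (so $p-1 \mid k$, hence $p-1 \mid 2k$) or $gAg^{-1}$ has trace $0$, giving $\alpha = -1$ and again $p-1 \mid 2k$. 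For part (d), I would identify $C_{ns}(p) \cong \F_{p^2}^\times$ acting on $\F_{p^2}$ by multiplication, so an element corresponding to $a \in \F_{p^2}^\times$ has eigenvalues $a, a^p$ (its Galois pair) and an element of $C_{ns}(p)^k$ has eigenvalues $a^k, a^{kp}$. After conjugation into $\langle \gamma, Z\rangle$ these must be equal, so $a^{k(p-1)} = 1$ for every $a \in \F_{p^2}^\times$; since $\F_{p^2}^\times$ is cyclic of order $p^2 - 1$, this forces $p^2-1 \mid k(p-1)$, hence $p+1 \mid k$, and combining with $p+1 \mid p^2-1$ gives $p+1 \mid \gcd(k, p^2-1)$.

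The main obstacle is not deep but bookkeeping: one must consistently pass between ``$gD^kg^{-1} \subseteq S$'' and ``$gAg^{-1} \in S$,'' and track which of $C_s(p), N_s(p), C_{ns}(p), N_{ns}(p), \langle \gamma, Z\rangle$ is under consideration together with its eigenvalue portrait. Once these portraits are in hand, every implication reduces to a short computation with roots of unity in $\F_p$ or $\F_{p^2}$.
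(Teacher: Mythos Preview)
Your argument is correct. For parts (c) and (d) it coincides with the paper's proof, which also argues via eigenvalues: the paper notes that elements of $C_{ns}(p)$ have eigenvalues $u\pm v\sqrt{\epsilon}$ (rational only when $v=0$) and that elements of $N_{ns}(p)\ssm C_{ns}(p)$ have eigenvalues $\pm\sqrt{u^2-v^2\epsilon}$, reaching the same conclusions you do; for (d) the paper likewise observes that elements of $\langle\gamma,Z\rangle$ have repeated eigenvalues, forcing a conjugate of $C_{ns}(p)^k$ into $Z$, and then finishes with the order count $\frac{p^2-1}{\gcd(k,p^2-1)}\mid p-1$ rather than your Frobenius-eigenvalue identity $a^{k(p-1)}=1$ in $\F_{p^2}^\times$.

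Where you genuinely diverge is in parts (a) and (b). The paper writes down a conjugating matrix $m=\begin{bmatrix}a&b\\c&d\end{bmatrix}$, computes $mnm^{-1}$ entry by entry, and then case-splits on which of $a,b,c,d$ vanish to force $g^k=1$ or $g^{2k}=1$. Your route is cleaner: since conjugation preserves eigenvalues, the generator $gAg^{-1}$ landing in $\langle\gamma,Z\rangle$ (repeated eigenvalue), in $C_s(p)$ (diagonal with entries $\{\alpha,1\}$ in some order), or in $N_s(p)\ssm C_s(p)$ (trace zero) immediately yields $\alpha=1$, $gAg^{-1}\in\{A,A_f\}$, or $\alpha=-1$ respectively. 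This buys you a uniform one-line argument in each case and avoids the four-variable bookkeeping; the paper's explicit computation, on the other hand, makes the conjugating matrix visible, which is occasionally useful elsewhere but not needed here.
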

\begin{proof}
For this proof, let us fix a generator $g$ of $\F_p^\times$; then it follows that $n:=\begin{bmatrix}
g^k&0\\
0&1
\end{bmatrix}$ generates $D^k$, and has order $\frac{p-1}{\gcd(k,p-1)}$.

For part a., suppose that $D^k\subseteqc \langle \gamma, Z\rangle$. Then for some $m=\begin{bmatrix}
a&b\\
c&d
\end{bmatrix}\in \GL_2(p)$, we have $mD^km^{-1}\subseteq \left\lbrace \begin{bmatrix}
x&xy\\
0&x
\end{bmatrix}\in \GL_2(p)\right\rbrace$. We check that
\begin{equation}\label{Eqn_ConjugateSemiCartan}
mnm^{-1}=\begin{bmatrix}
adg^k-bc&-ab(g^k-1)\\
cd(g^k-1)&ad-bcg^k
\end{bmatrix}\frac{1}{ad-bc}.
\end{equation}
This forces $cd(g^k-1)=0$. Since $|g^k|=\frac{p-1}{\gcd(k,p-1)}$, if $p-1\nmid k$ then $\gcd(k,p-1)\neq p-1$, and thus $g^k\neq 1$, which implies that $cd=0$. If $c=0$, then we see that 
\[
mnm^{-1}=\begin{bmatrix}
g^k&-bd^{-1}(g^k-1)\\
0&1
\end{bmatrix},
\]
which means $g^k=1$, a contradiction. Thus $c\neq 0$, which implies that $d=0$, and so
\[
mnm^{-1}=\begin{bmatrix}
1&ac^{-1}(g^k-1)\\
0&g^k
\end{bmatrix},
\]
again an impossibility. We conclude that $p-1\mid k$.

For part b., writing $mnm^{-1}\subseteq G\subseteq N_s(p)$ for some $m:=\begin{bmatrix}
a&b\\
c&d
\end{bmatrix}\in \GL_2(p)$, we have that $mnm^{-1}$ is either a diagonal or antidiagonal matrix. Assuming that $p-1\nmid k$, we have $g^k\neq 1$; and so \eqref{Eqn_ConjugateSemiCartan} implies that either $ab=cd=0$ (diagonal case), or $adg^k=bc$ and $bcg^k=ad$ (antidiagonal case). 

In the diagonal case, if $c=0$ then invertibility of $m$ implies that $ad\neq 0$, and thus $b=0$, which shows by \eqref{Eqn_ConjugateSemiCartan} that
\[
mnm^{-1}=\begin{bmatrix}
g^k&0\\
0&1
\end{bmatrix}=m;
\]
thus $mD^{k}m^{-1}=D^k\subseteq G$. On the other hand, if $c\neq 0$ then $d=0$, and thus $bc\neq 0$ and $a=0$. This shows that
\[
mnm^{-1}=\begin{bmatrix}
1&0\\
0&g^k
\end{bmatrix}=m_f.
\]
This implies that $mD^km^{-1}=D^k_f\subseteq G$. 

In the antidiagonal case, we have both $adg^{2k}=ad$ and $bcg^{2k}=bc$. If $a=0$ or $d=0$, then $bc\neq 0$; we thus have $ad\neq 0$ or $bc\neq 0$, so that $g^{2k}=1$, and hence $p-1\mid 2k$. This concludes part b.

For part c., suppose that $D^k\subseteqc N_{ns}(p)$. The eigenvalues of elements of $D^k$ come in pairs $(x^k, 1)$ where $x\in \F_p^\times$; on the other hand, eigenvalues of elements from $C_{ns}(p)$ come in pairs $u\pm v\sqrt{\epsilon}$, while eigenvalues from elements of $N_{ns}(p)\ssm C_{ns}(p)$ come in pairs $\pm\sqrt{u^2-v^2\epsilon}$ (N.B.: in both cases we assume $(u,v)\neq (0,0)\in \F_p^2$). In particular, from $D^k\subseteqc N_{ns}(p)$ we know that for some $(u,v)\in \F_p^2\ssm \lbrace (0,0)\rbrace$ we have $g^k=u+v\sqrt{\epsilon}$ and $1=u-v\sqrt{\epsilon}$, or (without loss of generality) $g^k=\sqrt{u^2-v^2\epsilon}$ and $1=-\sqrt{u^2-v^2\epsilon}$. In the former case, this forces $u=1$ and $v=0$, and thus $g^k=1$, which implies that $p-1\mid k$. In the latter case, we deduce that $g^{2k}=1$, and thus $p-1\mid 2k$.

Finally, for part d. we observe that elements of $\langle \gamma, Z\rangle$ have repeated eigenvalues. As noted in part c., elements of $C_{ns}(p)$ have eigenvalues of the form $a\pm b\sqrt{\epsilon}$, which forces elements of any conjugate of $C_{ns}(p)^k$ in $\langle \gamma, Z\rangle$ to be contained in $Z$. This implies that $\frac{p^2-1}{\gcd(k, p^2-1)}\mid p-1$, and so $p+1\mid \gcd(k, p^2-1)$.
\end{proof}
\section{Proofs of Theorems \ref{Thm_UniformBds_OnPrimeLevel_OfSmallDivisionFields} and \ref{Thm_UniformBds_OnPrimeLevel_OfAbelianDivisionFields}}
In this section, we prove Theorems \ref{Thm_UniformBds_OnPrimeLevel_OfSmallDivisionFields} and \ref{Thm_UniformBds_OnPrimeLevel_OfAbelianDivisionFields}. In both proofs, we let $d:=[F:\Q]$. We start with proving the more general abelian case, which is Theorem \ref{Thm_UniformBds_OnPrimeLevel_OfAbelianDivisionFields}.
\subsection{A proof of Theorem \ref{Thm_UniformBds_OnPrimeLevel_OfAbelianDivisionFields}}
By Proposition \ref{Prop_AbelianSubgroupOf_GL2p_Restraints}, for an appropriate choice of basis $\lbrace P,Q\rbrace$ for $E[p]$ we have that $G$ is contained in $B_0(p), N_s(p)$ or $N_{ns}(p)$. 

\textbf{Case 1,} where $G\subseteq B_0(p)$. We claim this implies that $G\subseteq C_s(p)$. 
Suppose this were not true. Then by Proposition \ref{Prop_AbelianSubgroupOf_GL2p_Restraints}.b we have $G=\langle \gamma,G\cap Z\rangle$ where $\gamma:=\begin{bmatrix}
    1&1\\
    0&1
\end{bmatrix}$. We consider the two cases for ramification of $p$ in $F$.
\begin{enumerate}
\item Suppose that $p\nmid \Delta_F$. We can write
\[
G=\im \begin{bmatrix}
r&*\\
0&r
\end{bmatrix}
\]
where $r\colon G_F\rightarrow \F_p^\times$ is the isogeny character of $\langle P\rangle$. However, we then have $r^2(G_F)=\det(G)=\chi_p(G_F)$, and consequently $\chi_p(G_F)\subseteq (\F_p^\times)^2$. Since $p$ is unramified in $F$, we must also have $\chi_p(G_F)=\F_p^\times$, which forces $p=2$.
\item Suppose that $p\mid \Delta_F$. Write $e_0:=e(\Cp\mid p)$ where $\Cp$ is a prime of $F$ above $p$. By Theorem \ref{Thm_ImageOfInertia}, we know that up to conjugacy $G$ contains $D^{ee_0}$, $C_{ns}(p)^{ee_0}$ or $D^{6\cdot d!}$.

If $E$ has potential good ordinary or bad multiplicative reduction at a prime above $p$, then by Theorem \ref{Thm_ImageOfInertia} we have $D^{ee_0}\subseteqc G$. Thus, by Lemma \ref{Lemma_SemiCartanAndNonsplitCartan_inAbelianSubgps}.a we have $p-1\mid ee_0$; since $e\in \lbrace 1,2,3,4,6\rbrace$ and $e_0\leq d$, this implies that $p\leq 6d+1$. If $E$ has potential good supersingular reduction with trivial wild inertia, then Theorem \ref{Thm_ImageOfInertia} implies that $C_{ns}(p)^{ee_0}\subseteqc G$. Thus, Lemma \ref{Lemma_SemiCartanAndNonsplitCartan_inAbelianSubgps}.d implies that $p+1\mid \gcd(ee_0, p^2-1)$,
which means $p\leq 6d-1$. Finally, in the potential supersingular case where wild inertia acts nontrivially, we have by Theorem \ref{Thm_ImageOfInertia} that $D^{(6d)!}\subseteqc G$, and thus by Lemma \ref{Lemma_SemiCartanAndNonsplitCartan_inAbelianSubgps}.a we get $p-1\mid(6d)!$, which means $p\leq (6d)!+1$. 
\end{enumerate}
By the above analysis, we conclude that if $G\subseteq B_0(p)$, and either $p>2$ with $p\nmid \Delta_F$, or $p>(6d)!+1$, then $G\subseteq C_s(p)$.

\textbf{Case 2,} where $G\subseteq N_s(p)$. We will show that $G\subseteq C_s(p)$ when $p$ is large enough. Since $p\nmid \# G$, by Theorem \ref{Thm_ImageOfInertia} we have $D^{ee_0}\subseteqc G$ or $C_{ns}(p)^{ee_0}\subseteqc G$, with notation as in Case 1.

Suppose that $G\not\subseteq C_{s}(p)$. Then by Proposition \ref{Prop_AbelianSubgroupOf_GL2p_Restraints}.c, we have $G=\langle n,G\cap Z\rangle$ for any $n\in G\ssm C_{s}(p)$. 
\begin{enumerate}
\item Suppose that $p\nmid \Delta_F$. Then $e_0=1$, and so $D^e\subseteqc G$ or $C_{ns}(p)^e\subseteqc G$. if $D^e\subseteqc G$ then Lemma \ref{Lemma_SemiCartanAndNonsplitCartan_inAbelianSubgps}.b implies that $p-1\mid 2e$, and thus $p\leq 13$.
If $C_{ns}(p)^e\subseteqc G$, then Lemma \ref{Lemma_SemiCartanAndNonsplitCartan_inAbelianSubgps}.d shows that $p+1\mid \gcd(e,p^2-1)$, which implies that $p\leq 7$.
\item Suppose that $p\mid \Delta_F$. Similar to the previous part, in the case $D^{ee_0}\subseteqc G$ we get $p-1\mid 2ee_0$, and in the case $C_{ns}(p)^{ee_0}\subseteqc G$ we have $p+1\mid \gcd(ee_0, p^2-1)$.
\end{enumerate}
We conclude that when $G\subseteq N_s(p)$, if  $p\nmid \Delta_F$ and $p>13$, or if $p>12d+1$, then $G\subseteq C_s(p)$ with $D^{ee_0}\subseteqc G$. 

\textbf{Case 3,} where $G\subseteq N_{ns}(p)$. We claim that $G\subseteq C_{ns}(p)$. Again by Theorem \ref{Thm_ImageOfInertia}, we know that $D^{ee_0}\subseteqc G$ or $C_{ns}(p)^{ee_0}\subseteq G$. By Lemma \ref{Lemma_SemiCartanAndNonsplitCartan_inAbelianSubgps}.c and an analysis identical to that of Case 2, we can show that if $D^{ee_0}\subseteqc G$, then $p\leq 13$ when $p\nmid \Delta_F$, and otherwise $p\leq 12d+1$. Therefore, for primes $p$ where $p\nmid \Delta_F$ and $p>13$, or where $p>12d+1$, we must have $C_{ns}(p)^{ee_0}\subseteqc G$. In this case, if $G\not\subseteq C_{ns}(p)$ then Proposition \ref{Prop_AbelianSubgroupOf_GL2p_Restraints}.c implies that $G=\langle \gamma,G\cap Z\rangle$, which by Lemma \ref{Lemma_SemiCartanAndNonsplitCartan_inAbelianSubgps}.d forces $p\leq 5$ when $p\nmid \Delta_F$, and $p\leq 6d-1$ otherwise -- both of which are impossible. 

We conclude that when $G\subseteq N_{ns}(p)$, if $p\nmid \Delta_F$ and $p>13$, or if $p>12d+1$, then $G\subseteq C_{ns}(p)$. Furthermore, we have $C_{ns}(p)^{ee_0}\subseteq G$, not just up to conjugation, since $C_{ns}(p)$ is cyclic. This proves Theorem \ref{Thm_UniformBds_OnPrimeLevel_OfAbelianDivisionFields}.

\subsection{A proof of Theorem \ref{Thm_UniformBds_OnPrimeLevel_OfSmallDivisionFields}}
Assume that $F(E[p])=F(\zeta_p)$; then by Proposition \ref{Prop_ConditionForAbelianAndCyclotomic}, we have for $G:=\rho_{E,p}(G_F)$ that $G\cap \SL_2(p)=1$. Since $G$ is abelian, by Proposition \ref{Prop_AbelianSubgroupOf_GL2p_Restraints} it must be contained in $B_0(p), N_s(p)$ or $N_{ns}(p)$. Since $G$ embeds into $\F_p^\times$ via the determinant map, we have $p\nmid \# G$, which simplifies our analysis. For example, Theorem \ref{Thm_ImageOfInertia} shows that we have $D^{ee_0}\subseteqc G$ or $C_{ns}(p)^{ee_0}\subseteqc G$ for some $e\in \lbrace 1,2,3,4,6\rbrace$ and for any ramification index $e_0$ for a prime in $F$ above $p$.

We will show that $D^{ee_0}\subseteq G\subseteq C_s(p)$. We first claim that $D^{ee_0}\subseteqc G$. Suppose this is not true; then $C_{ns}(p)^{ee_0}\subseteqc G$, and so comparing sizes yields 
\[
p^2-1\mid \gcd(ee_0, p^2-1)\cdot \# G\mid \gcd(ee_0, p^2-1)\cdot (p-1),
\]
whence we have
\[
p+1\mid \gcd(ee_0, p^2-1).
\]
If $p\nmid \Delta_F$ then $e_0=1$, and thus $p\leq 5$. Otherwise, from $e_0\leq d$ we have $p\leq 6d-1$. We conclude that if $p\nmid \Delta_F$ and $p>5$, or if $p\mid \Delta_F$ and $p>6d-1$, then $D^{ee_0}\subseteqc G$. 

By Theorem \ref{Thm_DicksonClassification}, we know that an appropriate choice of basis for $E[p]$ implies $G$ is contained in $B_0(p)$, $N_s(p)$ or $N_{ns}(p)$. We break this up into three cases.

\textbf{Case 1:} $G\subseteq B_0(p)$. By Proposition \ref{Prop_AbelianSubgroupOf_GL2p_Restraints}.b, since $p\nmid \# G$ we have $G\subseteqc C_s(p)$.

\textbf{Case 2:} $G\subseteq N_s(p)\ssm C_s(p)$. Then $D^{ee_0}\subseteqc G\subseteq N_s(p)$. If $e_0=1$, then by Lemma \ref{Lemma_SemiCartanAndNonsplitCartan_inAbelianSubgps}.b we have $p-1\mid 2e$, and thus $p\leq 13$; otherwise, we have $p-1\mid 2ee_0$, and thus $p\leq 12d+1$. 

\textbf{Case 3:} $G\subseteq N_{ns}(p)$. Then by Lemma \ref{Lemma_SemiCartanAndNonsplitCartan_inAbelianSubgps}.c we have $p-1\mid 2ee_0$, which gives the same conclusions as in Case 2.

We conclude that when $G\cap \SL_2(p)=1$, if $p\nmid \Delta_F$ and $p>13$, or if $p\mid\Delta_F$ and $p>12d+1$, then for some basis $\lbrace P,Q\rbrace$ of $E[p]$ we have $D^{ee_0}\subseteqc G\subseteq C_s(p)$. By Lemma \ref{Lemma_SemiCartanAndNonsplitCartan_inAbelianSubgps}.b, we can assume that $D^{ee_0}\subseteq G$, without loss of generality. Since $G$ is cyclic, we know that $G^{ee_0}\subseteq D^{ee_0}$. Let us prove the following lemma, which shows there exists an extension $K/F$ of degree $[K:F]=[G:G^{ee_0}]\mid ee_0$ for which $\rho_{E,p,P,Q}(G_{K})\subseteq D$.
\begin{lemma}\label{Lemma_Subgroup_IsAttained_byBaseChange}
Let $E/F$ be an elliptic curve, and $n\in\Z^+$. Then for any subgroup $H\subseteq \rho_{E,n}(G_F)$, there exists a (unique) finite subextension $F(E[n])/K/F$ for which $\rho_{E,n}(G_K)=H$.
\end{lemma}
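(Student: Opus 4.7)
The plan is to apply the fundamental theorem of Galois theory to the finite Galois extension $F(E[n])/F$. Recall from the discussion before Proposition \ref{Prop_ConditionForAbelianAndCyclotomic} that the kernel of $\rho_{E,n}\colon G_F\rightarrow \GL_2(n)$ is $\Gal(\oQ/F(E[n]))$, so $\rho_{E,n}$ factors through a faithful representation
\[
\overline{\rho}_{E,n}\colon \Gal(F(E[n])/F)\xrightarrow{\sim} G:=\rho_{E,n}(G_F).
\]
Thus any subgroup $H\subseteq G$ pulls back via $\overline{\rho}_{E,n}$ to a subgroup $\widetilde{H}\subseteq \Gal(F(E[n])/F)$ with $\overline{\rho}_{E,n}(\widetilde{H})=H$.

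Next I would invoke the Galois correspondence for the finite Galois extension $F(E[n])/F$: the map sending an intermediate field $K$ with $F\subseteq K\subseteq F(E[n])$ to $\Gal(F(E[n])/K)$ is a bijection between such $K$ and subgroups of $\Gal(F(E[n])/F)$. Applying this bijection to $\widetilde{H}$, I obtain a unique intermediate field $K$ with $\Gal(F(E[n])/K)=\widetilde{H}$.

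Finally, I would verify that $\rho_{E,n}(G_K)=H$. Restricting $\rho_{E,n}$ to $G_K\subseteq G_F$, its image is exactly the image of $\Gal(F(E[n])/K)=\widetilde{H}$ under $\overline{\rho}_{E,n}$, since every element of $G_K$ factors through $\Gal(F(E[n])/K)$ modulo the kernel $\Gal(\oQ/F(E[n]))\subseteq G_K$. Hence $\rho_{E,n}(G_K)=\overline{\rho}_{E,n}(\widetilde{H})=H$, as desired, and uniqueness of $K$ follows directly from uniqueness in the Galois correspondence. There is no real obstacle here: the result is essentially a translation of the Galois correspondence through the injection $\overline{\rho}_{E,n}$. (As a bonus one recovers $[K:F]=[G:H]$, which is what is used in the application following the lemma.)
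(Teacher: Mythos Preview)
Your proof is correct and follows essentially the same approach as the paper: both arguments use the Galois correspondence for the finite Galois extension $F(E[n])/F$ to produce the unique intermediate field $K$ with $\Gal(F(E[n])/K)$ corresponding to $H$, and then identify $\rho_{E,n}(G_K)$ with $H$ by noting that the kernel of $\rho_{E,n}$ restricted to $G_K$ is still $\Gal(\oQ/F(E[n]))$. The paper phrases the last step as $K(E[n])=F(E[n])$ while you phrase it via the factorization through $\overline{\rho}_{E,n}$, but these are the same observation.
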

\begin{proof}
Since $F(E[n])/F$ is a finite Galois extension, there exists a unique subextension $F(E[n])/K/F$ for which $H=\Gal(F(E[n])/K)$. For the base change $E/K$ one has the Galois representation
\[
\rho_{E/K,n}\colon G_K\rightarrow \GL_2(n),
\]
where we use the same implicit basis of $E[n]$ for both $\rho_{E/F,n}$ and $\rho_{E/K,n}$.
We find that the kernel of this representation is
\[
\ker\rho_{E/K,n}=\Gal(\oQ/K(E[n])).
\]
Since $K\subseteq F(E[n])$, it follows that $K(E[n])=F(E[n])$, which shows that $\ker\rho_{E/K,n}=\ker\rho_{E/F,n}$. We thus have
\[
\rho_{E/K,n}(G_K)\cong G_K/\Gal(\oQ/F(E[n]))\cong \Gal(F(E[n])/K)=H.\qedhere
\]
\end{proof}
Continuing the proof of Theorem \ref{Thm_UniformBds_OnPrimeLevel_OfSmallDivisionFields}, we deduce that the basis element $Q$ of $E[p]$ is $K$-rational. If we let $b:=b([K:\Q])$ denote a strong uniform bound on prime orders of torsion points over $K$ (such as the one given by Parent \cite{Par99}), then we have $p\leq b$. We conclude that $E$ has an order $p$ point whose field of definition lies in an extension of $F$ of degree at most $6$ when $p\nmid \Delta_F$ and $p>13$, and of degree $\leq 6d$ when $p\mid \Delta_F$ and $p>12d+1$. This proves the claimed uniform bounds on primes levels of cyclotomic division fields.
\bibliographystyle{alpha}
\bibliography{bibfile}
\end{document}